\documentclass[11pt,letterpaper]{amsart}

%%%%%%%%%%%%%%%%%%%%%%%%%%%%%%%%%%%%%%%%%%%%%%%%%%%%%%%%%%%%%%%%%%%%%%%%%%%%%%%%%%%%%%%%%%%%%%%%%%%%%%%%%%%%%%%%%%%%%%%%%%%%%%%%%%%%%%%%%%%%%%%%%%%%%%%%%%%%%%%%%%%%%%%%%%%%%%%%%%%%%%%%%%%%%%%%%%%%%%%%%%%%%%%%%%%%%%%%%%%%%%%%%%%%%%%%%%%%%%%%%%%%%%%%%%%%
\usepackage{amsmath,amssymb,amsfonts,%amsthm,amsopn,
bbm}
\usepackage{graphicx,
color}
%\usepackage{showkeys}

%\usepackage{geometry}
%\geometry{a4paper,top=3cm,bottom=3cm,left=3.8cm,right=3.8cm,heightrounded,bindingoffset=0mm}

%\linespread{1.05}
%\setlength{\parskip}{0.4\baselineskip}

%tikz}%,forest}
%\usepackage[all]{xy}
\usepackage{amsmath}
%\usepackage{multirow, longtable, makecell, caption, array,enumerate}
%\usepackage{mathtools}

%\usepackage{showkeys}

%\usepackage{refcheck}
%\usepackage{bookmark}
%\usepackage{hyperref}

%\usepackage{float}
%\usepackage{caption}
%\captionsetup[figure]{font=small}
%\captionsetup{width=\linewidth}
%\setlength{\textwidth}{\paperwidth}
%\addtolength{\textwidth}{-2.5in}
%\calclayout

\newcommand\eps{\varepsilon}

\newcommand\logmeno{\mathop{\mathrm{Log}}\nolimits_-}

\newtheorem{theorem}{Theorem}[section]

\newtheorem{corollary}[theorem]{Corollary}
\newtheorem{proposition}[theorem]{Proposition}

\theoremstyle{remark}
\newtheorem{remark}[theorem]{Remark}

\newtheorem*{remark*}{Remark}

\theoremstyle{definition}

\newcommand\bR{\mathbb{R}}
\newcommand\bC{\mathbb{C}}

\newcommand{\rdd}{\mathbb{R}^{2d}}

\newcommand\Hp{\bC_+}  % semipiano superiore
\newcommand\GH{G_H}

\newcommand\cB{\mathcal{B}}

\def\cC{\mathcal{C}}

\newcommand\cV{\mathcal{V}}

\newcommand\kp{\kappa_p}

\numberwithin{equation}{section}

\begin{document}
	
\title[The norm of localization operators]{The norm of time-frequency and wavelet localization operators}
\author{Fabio Nicola}
\address{Dipartimento di Scienze Matematiche, Politecnico di Torino, Corso Duca degli Abruzzi 24, 10129 Torino, Italy.}
\email{fabio.nicola@polito.it}
\author{Paolo Tilli}
\address{Dipartimento di Scienze Matematiche, Politecnico di Torino, Corso Duca degli Abruzzi 24, 10129 Torino, Italy.}
\email{paolo.tilli@polito.it}
\subjclass[2010]{42B10, 49Q20, 49R05, 81S30, 94A12}
%\date{}
\keywords{Short-time Fourier transform, wavelet transform, localization operator, Toeplitz operator, uncertainty principle}
%\date{\today}

\begin{abstract}  Time-frequency localization operators (with Gaussian window) $L_F:L^2(\bR^d)\to L^2(\bR^d)$, where $F$ is a weight in $\bR^{2d}$, were introduced in signal processing by I. Daubechies in 1988, inaugurating a new, geometric, phase-space perspective. Sharp upper bounds for the norm (and the singular values) of such operators turn out to be a challenging issue with deep applications in signal recovery, quantum physics and the study of uncertainty principles.

In this note we provide optimal upper bounds for the operator norm $\|L_F\|_{L^2\to L^2}$, assuming $F\in L^p(\bR^{2d})$, $1<p<\infty$ or $F\in L^p(\bR^{2d})\cap L^\infty(\bR^{2d})$, $1\leq p<\infty$. It turns out that two regimes arise, depending on whether the quantity $\|F\|_{L^p}/\|F\|_{L^\infty}$ is less or greater than a certain critical value. In the first regime the extremal weights $F$, for which equality occurs in the estimates, are certain Gaussians, whereas in the second regime they are proved to be \textit{truncated} Gaussians, degenerating in a multiple of a characteristic function of a ball for $p=1$. This phase transition through truncated Gaussians appears to be a new phenomenon in time-frequency concentration problems. For the analogous problem for wavelet localization operators -where the Cauchy wavelet plays the role of the above Gaussian window- a complete solution is also provided.

\end{abstract}
\maketitle
\section{Introduction}
The uncertainty principle is an ubiquitous theme in mathematics and represents an endless source of challenging and inspirational problems. The literature in this connection is enormous. For a deep introduction to the topic, we address the reader to the classical contributions \cite{cowling, fefferman,folland} and the recent account \cite{wigderson}. 

Roughly speaking, the uncertainty principle states that a function and its Fourier transform cannot both be too concentrated, or equivalently that a time-frequency distribution cannot be too concentrated in the time-frequency space. To better develop the latter point of view, we recall the definition of a time-frequency distribution known in harmonic analysis and signal processing as short-time, or windowed, Fourier transform  \cite{degosson_book,grochenig-book,mallat,muscalu}, and in mathematical physics as coherent state transform \cite{liebloss}.

Let $\varphi$ be the ``Gaussian window''
\begin{equation}
\label{defvarphi}
 \varphi(x)=2^{d/4}e^{-\pi |x|^2}, \quad x\in\bR^d,\end{equation}
normalized in such way that $\|\varphi\|_{L^2}=1$.
The short-time Fourier transform with Gaussian window of a function $f\in L^2(\bR^{d})$, is defined as
\[
\cV f(x,\omega)=
\int_{\bR^d} e^{-2\pi i y\cdot\omega} f(y)\varphi(x-y)dy, \qquad x,\omega\in\bR^d.
\]
It turns out that $\cV:L^2(\bR^d)\to L^2(\bR^{2d})$ is an isometry, so that the function $|\cV f(x,\omega)|^2$ can be interpreted as the time-frequency energy density of $f$. Consequently, the integral
$
\int_\Omega |\cV f(x,\omega)|^2\, dx d\omega,
$
over a measurable subset $\Omega\subset\bR^{2d}$, will be the fraction of its energy trapped in $\Omega$
(see e.g. \cite{abreu2012,abreu2019,abreu2018,galbis2010,romero,seip1991,nicola_tilli}).
Sharp upper bounds for this quantity were recently proved in \cite{nicola_tilli}; the relevant estimate, that we state here in dimension $d=1$ for simplicity, reads
\begin{equation}\label{eq intro 1}
\int_\Omega |\cV f(x,\omega)|^2\, dx d\omega\leq (1-e^{-|\Omega|})\|f\|_{L^2}^2
\end{equation}
(see Theorem \ref{generalFK} for the general statement), where $|\Omega|$ is
the Lebesgue measure of $\Omega\subset\bR^2$. Since when $|\Omega|$ is finite 
the constant on the right-hand side is strictly less than one, this result can be regarded as a manifestation of the uncertainty principle. Upper bounds in the same spirit, for certain Cantor-type rotationally invariant subsets $\Omega\subset\bR^2$ have recently been obtained in \cite{abreu_fractal,knutsen2020,knutsen2022,knutsen_arXiv} in connection with the fractal uncertainty principle. We also address to \cite{bonami,luef_arkiv,galperin,grochenig,nicola_hrt} for other forms
of the uncertainty principle involving the short-time Fourier transform.

While the localization in the time-frequency plane by means of the characteristic function of a subset $\Omega$, as in \eqref{eq intro 1}, is certainly natural, following a practice wich dates back at least to \cite{cowling} and fully promoted in \cite{daubechies}, one can similarly localize using other \textit{weight} functions $F(x,\omega)\geq 0$ (satisfying possibly other constraints such as $F(x,\omega)\leq 1$, to avoid an amplification) and ask for similar estimates for the weighted energy
\[
\langle L_F f,f\rangle =
\int_{\bR^{2d}} F(x,\omega) |\cV f(x,\omega)|^2\, dx d\omega.
\]
Here we introduced the so-called \textit{time-frequency localization operator} $L_F$ associated with the weight $F$, defined as
\begin{equation}\label{eq def LF}
L_F =\cV^\ast F \cV,
\end{equation}
or weakly as 
\begin{equation}\label{eq def LFbis}
\langle L_F f, g\rangle_{L^2(\mathbb{R}^{d})} = \langle F ,\overline{\cV f}\cV g\rangle_{L^2(\mathbb{R}^{2d})}.
\end{equation}
Indeed, if $F\in L^p(\mathbb{R}^{2d})$, $1\leq p\leq\infty$, is any complex-valued function, it is known that $L_F$ is a bounded operator in $L^2(\bR^d)$ (see e.g. \cite{wong}). This class of operators were introduced by I. Daubechies \cite{daubechies} as a joint time-frequency version of the celebrated Landau-Pollack-Slepian operator \cite{landau1985,landau1961,slepian1983} from signal theory, and by F. A. Berezin \cite{berezin}, motivated by the quantization problem in quantum mechanics. Since then, they have been object of intensive studies, especially regarding boundedness, compactness, Schatten properties and asymptotics of the eigenvalues, even for more general weights and function spaces; see the classical references \cite{ berezin-book,daubechies1990,daubechies_book,wong}, the more recent contributions \cite{abreu2016,abreu2017,bayer, cordero,corderonicola2011,fernandez2006,luefJMPA,luef} and the references therein.

In this note we confine ourselves to the basic case of weights in Lebesgue spaces, 
and we prove optimal estimates of the kind
\begin{equation}
\label{stimaC}
\Vert L_F\Vert_{L^2(\bR^2)\to L^2(\bR^d)} \leq C=C(p,A,B)
\end{equation}
for the norm of the localization operator $L_F$, for any function $F\in L^p(\rdd)$
satisfying the double constraint
\begin{equation}
\label{constraints}
\Vert F\Vert_{L^\infty}\leq A\qquad\text{and}\qquad
\Vert F\Vert_{L^p}\leq  B.
\end{equation}
Here and throughout the paper, we assume that $p \in [1,\infty)$, $A\in (0,\infty]$ and $B\in (0,\infty)$, 
with the further condition that $A<\infty$ when $p=1$. Moreover, with some abuse of notation,
the symbol $\Vert F\Vert_{L^\infty}$ is always meant in the broader sense of $\mathop{\rm ess\,sup}|F|$,
i.e. $+\infty$ when $F\not\in L^\infty(\rdd)$. With this proviso, 
the case where $A=\infty$ (and $p>1$) corresponds to dropping the $L^\infty$ constraint:
in this case, optimal bounds can be deduced from Lieb's uncertainty inequality \cite{lieb1978} (see below) and the optimal functions, i.e. those functions $F$ that achieve equality in \eqref{stimaC}, are certain Gaussians. Hence in estimating $\Vert L_F\Vert$, the $L^\infty$
constraint in \eqref{constraints} is only relevant when $A$ is less than the $L^\infty$ norm of these Gaussians. 

% (if $B=+\infty$ the above supremum is trivially equal to $A$, since $L_F$ is the identity operator 
% when $F\equiv 1$). 
We are also able to characterize in all cases the optimal functions $F$, as well as all pairs of functions $f$ and $g$ (normalized in $L^2(\bR^d)$)
such that
\begin{equation}
\label{optfg}
\|L_F\|_{L^2\to L^2}=|\langle L_F f,g\rangle|,
\end{equation}
as Gaussians functions of the kind
\begin{equation}
\label{charf}
x\mapsto c e^{2\pi i x\cdot \omega_0} \varphi(x-x_0),\quad x\in\bR^d,
\end{equation}
for some unimodular $c\in\bC$ and some $(x_0,\omega_0)\in\bR^d\times\bR^d$, 
where $\varphi$ is the Gaussian in \eqref{defvarphi}.

Throughout the paper, we will frequently refer to the constant
\begin{equation}
\label{defkp}\kp:=\frac{p-1}{p},
\end{equation}
without any further reference to \eqref{defkp}.

To give a flavour of our results, in dimension $d=1$ we can state the following theorem
(see Section \ref{sec mult} for the results in arbitrary dimension).
\begin{theorem}\label{thm thm2} Let $p\in [1,\infty)$, $A\in(0,\infty]$, $B\in(0,\infty)$,
with the proviso that $A<\infty$ when $p=1$, and let $F\in L^p(\bR^2)$ satisfy the constraints
\eqref{constraints}.

\begin{itemize}
\item[(i)] If $p=1$, then
\begin{equation}
\label{eq stima0}
\Vert L_F\Vert_{L^2\to L^2}\leq A\big(1-e^{-B/A}\big),
\end{equation}
and equality occurs if and only if, for some $\theta\in\bR$ and some $z_0\in\bR^2$,
\begin{equation}\label{eq estr22}
F(z)=A\,e^{i\theta}\chi_{\cB}(z-z_0)
\qquad \forall z\in\mathbb{R}^2,
\end{equation}
where $\cB\subset\mathbb{R}^2$ is the disc of area $B/A$, centered at the origin.

\item[(ii)] If $p>1$ and $(B/A)^p \leq \kp$ (in particular when $A=\infty$), then
\begin{equation}\label{eq stima1}
\Vert L_F\Vert_{L^2\to L^2}\leq  \kp^{\kp} B,
\end{equation}
with equality if and only if, for some $\theta\in\bR$ and some $z_0\in\bR^2$,
\begin{equation}\label{eq estr1}
F(z)= B \kp^{-\frac 1 p} e^{i\theta}e^{-\frac \pi {p-1}|z-z_0|^2},\qquad z\in\mathbb{R}^2.
\end{equation}

\item[(iii)] If $p>1$ and $(B/A)^p > \kp$, then
\begin{equation}\label{eq stima2}
\Vert L_F\Vert_{L^2\to L^2}\leq A\Big(1-\frac{e^{\kp-(B/A)^p}} p\Big),
\end{equation}
with equality if and only if, for some $\theta\in\bR$ and some $z_0\in\bR^2$,
\begin{equation}\label{eq estr2}
F(z)=e^{i\theta} \min\{\lambda e^{-\frac{\pi}{p-1}|z-z_0|^2},A\},\qquad z\in\bR^2,
%F(z)=c\,\min\{\sigma\gaup(z-z_0),1\}\qquad z\in\mathbb{R}^2
\end{equation}
where $\lambda=A e^{(B/A)^p/(p-1)-1/p}>A$.
\end{itemize}
Finally, if $F$ achieves equality in \eqref{eq stima0}, \eqref{eq stima1}, or
\eqref{eq stima2}, then
\begin{equation*}
%\label{saturF} 
\Vert F\Vert_{L^p} = B,
\end{equation*}
and \eqref{optfg} holds
%\begin{equation}
%\label{optfg}
%\|L_F\|_{L^2\to L^2}=|\langle L_F f,g\rangle|
%\end{equation}
for some $f,g\in L^2(\bR)$ such that $\|f\|_{L^2}=\|g\|_{L^2}=1$, if and only if 
both $f$ and $g$ are Gaussians of the kind \eqref{charf} (with $d=1$), possibly with different 
$c$'s, but with the same $(x_0,\omega_0)\in\bR^2$ given by $z_0$ (the center of symmetry
of $|F|$).
\end{theorem}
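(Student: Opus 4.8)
The plan is to reduce the estimate, through a chain of symmetrization steps, to a one-dimensional variational problem that can be solved explicitly, and then to obtain the equality cases by retracing that chain. \emph{Reduction.} Since $\|L_F\|=\sup\{|\langle L_F f,g\rangle|:\|f\|_{L^2}=\|g\|_{L^2}=1\}$ and, by \eqref{eq def LFbis} and Cauchy-Schwarz,
\[
|\langle L_F f,g\rangle|\le\int_{\bR^2}|F|\,|\cV f|\,|\cV g|
\le\Big(\int_{\bR^2}|F|\,|\cV f|^2\Big)^{1/2}\Big(\int_{\bR^2}|F|\,|\cV g|^2\Big)^{1/2},
\]
one gets $\|L_F\|\le\|L_{|F|}\|=\sup_{\|f\|_{L^2}=1}\int_{\bR^2}|F(z)|\,|\cV f(z)|^2\,dz$, the last identity because $L_{|F|}$ is positive and self-adjoint. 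Thus it suffices to bound $\int_{\bR^2}Fu$ for $F\ge0$ and $u=|\cV f|^2$ with $\|f\|_{L^2}=1$; note $0\le u\le1$ and $\int u=1$. By Theorem \ref{generalFK} (the Faber-Krahn inequality of \cite{nicola_tilli}), applied to the super-level sets of $u$, the decreasing rearrangement $u^*$ satisfies $\int_0^t u^*(\sigma)\,d\sigma=\sup_{|\Omega|\le t}\int_\Omega u\le 1-e^{-t}$ for all $t>0$. The Hardy-Littlewood inequality gives $\int_{\bR^2}Fu\le\int_0^\infty F^*(s)u^*(s)\,ds$, where $\|F^*\|_{L^\infty}=\|F\|_{L^\infty}\le A$ and $\|F^*\|_{L^p}=\|F\|_{L^p}\le B$; and since $F^*$ is non-increasing, integrating by parts and using $\int_0^t u^*\le 1-e^{-t}$ with equality as $t\to\infty$ yields
\begin{equation*}
\int_{\bR^2}Fu\le\int_0^\infty F^*(s)\,e^{-s}\,ds .
\end{equation*}

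\emph{The one-dimensional problem.} It remains to maximize $\int_0^\infty\phi(s)\,e^{-s}\,ds$ over non-increasing $\phi\ge0$ with $\phi\le A$ and $\|\phi\|_{L^p}\le B$. A Lagrange-multiplier (bathtub) argument shows the maximizer is $\phi(s)=\min\{c\,e^{-s/(p-1)},A\}$ when $p>1$ (and $\phi=A\,\chi_{[0,B/A]}$ when $p=1$), with the free constant determined by $\|\phi\|_{L^p}=B$ (the $L^p$-constraint is always active at the optimum). The truncation at the level $A$ occurs on a set of positive measure precisely when $(B/A)^p>\kp$; this splits the analysis into case (ii) (here $c=B\kp^{-1/p}\le A$, no truncation, optimal value $\kp^{\kp}B$) and case (iii) (truncation on $[0,s_0]$ with $s_0=(B/A)^p-\kp>0$, optimal value $A(1-e^{-s_0}/p)$), case (i) being the limit $p=1$, $\kp=0$. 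A direct computation of these integrals gives \eqref{eq stima0}, \eqref{eq stima1} and \eqref{eq stima2}; when $A=\infty$ one is always in case (ii) and recovers Lieb's inequality.

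\emph{Equality cases.} Assume $|\langle L_Ff,g\rangle|$ equals the right-hand side of \eqref{eq stima0}, \eqref{eq stima1} or \eqref{eq stima2}, with $\|f\|_{L^2}=\|g\|_{L^2}=1$, and retrace the chain above. The triangle inequality and Cauchy-Schwarz force $F=e^{i\theta}|F|$, $\overline{\cV f}\,\cV g\ge0$ and $|\cV f|=|\cV g|$ on $\{F\ne0\}$, and both $f$ and $g$ to be maximizers for $L_{|F|}$; the Hardy-Littlewood step forces $|F|$ and $u=|\cV f|^2$ to have nested super-level sets; the integration-by-parts step forces $\int_0^t u^*=1-e^{-t}$ on the interval where $F^*$ is strictly decreasing, whence the rigidity (equality) part of Theorem \ref{generalFK} gives $|\cV f(z)|^2=e^{-\pi|z-z_0|^2}$ for some $z_0\in\bR^2$, so that $f$ (and, by the above, also $g$) is a Gaussian of the form \eqref{charf} centered at $z_0$; finally $F^*$ must be the explicit maximizer of the one-dimensional problem, and $\|F\|_{L^p}=\|F^*\|_{L^p}=B$. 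Since the set $\{u>e^{-s}\}$ is the disc $\{\pi|z-z_0|^2<s\}$, which has area $s$, being nested with $u$ means $|F|(z)=F^*(\pi|z-z_0|^2)$; plugging in the explicit $F^*$ yields exactly \eqref{eq estr22}, \eqref{eq estr1} and \eqref{eq estr2}. The converse is a direct verification that these $F$, paired with the Gaussian \eqref{charf} centered at $z_0$, realize equality, and that no other normalized pair $(f,g)$ does.

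\emph{The main difficulty} is this equality analysis: one must combine the equality cases of several a priori unrelated inequalities --- the triangle inequality, Cauchy-Schwarz, the Hardy-Littlewood rearrangement inequality, the monotonicity/integration-by-parts estimate, and the pointwise variational problem --- and, above all, invoke the equality characterization of the Faber-Krahn inequality of \cite{nicola_tilli}, which is what upgrades ``the distribution of $|\cV f|^2$ is extremal'' to ``$f$ is a time-frequency shift of $\varphi$''. A further technical subtlety is that $\|L_{|F|}\|$ need not be attained for a general $F$, so the equality discussion has to be phrased via the existence of an actual maximizing pair $(f,g)$, which is exactly what holds for the extremal weights $F$.
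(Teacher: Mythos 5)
Your proof is correct but takes a genuinely different, dual route from the paper. The paper proves Theorem~\ref{thm4} by slicing $|F|$ with the layer-cake formula and applying Theorem~\ref{generalFK} to each superlevel set $\{|F|>t\}$, which produces the bound $\int_0^\infty G(\mu(t))\,dt$ in terms of the distribution function $\mu$ of $|F|$; the variational problem of Section~\ref{sec due} then has $\mu$ as the unknown. You instead rearrange $u=|\cV f|^2$, pass to $\int F^*u^*$ by Hardy--Littlewood, and integrate by parts against the cumulative Faber--Krahn bound $\int_0^t u^*\le 1-e^{-t}$, reaching $\int_0^\infty F^*(s)e^{-s}\,ds$; your variational problem has $F^*$ as the unknown. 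The two bounds, and the two one-dimensional problems, are exactly the same quantity: since $\mu$ and $F^*$ are generalized inverses of each other, Fubini gives $\int_0^\infty F^*(s)e^{-s}\,ds=\int_0^\infty(1-e^{-\mu(t)})\,dt$, and $\|F^*\|_{L^p}^p=p\int_0^\infty t^{p-1}\mu(t)\,dt$. What your route buys is a cleaner statement of the reduced problem (monotone $\phi$ with $\phi\le A$ and $\|\phi\|_{L^p}\le B$, against the fixed weight $e^{-s}$); what it costs is an extra inequality (Hardy--Littlewood) whose equality case must also be unwound, and which the paper avoids by working with the level sets of $F$ directly. Two smaller remarks: your ``bathtub/Lagrange-multiplier'' resolution of the one-dimensional problem is correct in outcome but is the step that needs the most care to make rigorous (existence of a maximizer, activity of the $L^p$ constraint, uniqueness, and the argument that forces the Euler--Lagrange relation $\phi^{p-1}\propto e^{-s}$ away from the level $A$ — the paper's Propositions~\ref{pro pro1}--\ref{pro pro2} and Theorem~\ref{thmcharmax} do exactly this work). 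Also, the ``subtlety'' you flag at the end, that $\|L_{|F|}\|$ need not be attained, is not actually an issue here: for $p<\infty$ the operator $L_F$ is compact (as the paper notes in the proof of Theorem~\ref{thm4}), so a maximizing pair $(f,g)$ always exists.
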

As a consequence, given $F\in L^p(\mathbb{R}^2)$, letting $B=\Vert F\Vert_{L^p}$ and $A=\Vert F\Vert_{L^\infty}$, one obtains the following estimates. 
\begin{corollary}\label{thm thm2 bis}\
\begin{itemize}
\item[(a)] Let $1< p<\infty$. For every  $F\in  L^p(\mathbb{R}^2)\setminus\{0\}$ we have
\begin{equation}\label{eq stima1 bis bis}
\|L_F\|_{L^2\to L^2}\leq \kp^{\kp}\,\|F\|_{L^p},
\end{equation}
with equality if and only if $F$ is as described in \eqref{eq estr1}, for some $B>0$, $\theta\in\mathbb{R}$, $z_0\in\mathbb{R}^2$. 
\medskip
\item[(b)] Let $1\leq p<\infty$. For every $F\in L^\infty(\mathbb{R}^2)\cap L^p(\mathbb{R}^2)\setminus\{0\}$, satisfying  $(\|F\|_{L^p}/\|F\|_{L^\infty})^p> \kp$, we have
\begin{equation}\label{eq stima2 bis}
\|L_F\|_{L^2\to L^2}\leq \|F\|_{L^\infty}\Big(1-\frac{e^{\kp-(\|F\|_{L^p}/\|F\|_{L^\infty})^p}}{p}\Big).
\end{equation}

\medskip
If $p>1$, equality occurs in \eqref{eq stima2 bis} if and only if $F$ has the form in \eqref{eq estr2}
for some $\lambda>A>0$, $\theta\in\mathbb{R}$, $z_0\in\mathbb{R}^2$.

\medskip
If $p=1$,  equality occurs in \eqref{eq stima2 bis} if and only if $F=c\,\chi_{\cB}$
for some $c\in\bC\setminus\{0\}$, where $\cB\subset\mathbb{R}^2$ is a ball.
\end{itemize}
%In the above cases of equality, if $F\not\equiv0$, we have $\|L_F\|_{L^2\to L^2}=|\langle L_F f,g\rangle|$ for some $f,g\in L^2(\bR)$, $\|f\|_{L^2}=\|g\|_{L^2}=1$, if and only if for some $c,c'\in\bC$, $|c|=|c'|=1$ and some $(x_0,\omega_0)\in\bR^2$, \eqref{eq deff} holds true.
\end{corollary}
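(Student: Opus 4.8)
The plan is to reduce everything to a one-dimensional variational problem for the distribution function of $F$, exploiting the rotational structure of the Gaussian window. First I would invoke the key estimate from \cite{nicola_tilli} (Theorem \ref{generalFK}): since $|\cV f|^2$ is a nonnegative function of integral $\|f\|_{L^2}^2$ whose sublevel sets satisfy the isoperimetric-type bound that $\int_\Omega|\cV f|^2 \leq 1-e^{-|\Omega|}$ when $\|f\|_{L^2}=1$, the quantity $\langle L_F f,f\rangle = \int F|\cV f|^2$ is controlled, by a layer-cake/rearrangement argument, by $\int_0^\infty u^*(s)\,(-d/ds)(1-e^{-s})\,ds$ where $u^*$ is the decreasing rearrangement of $F$ on $\bR^{2d}$ (identified with a function on $(0,\infty)$ via the measure). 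More precisely, writing $G(s)=1-e^{-s}$ and using $\int F |\cV f|^2 \le \int_0^\infty |F|^*(s)\,d(\text{something})$, one gets $\|L_F\| \le \sup\{\int_0^\infty m(s)\,G'(s)\,ds\}$ over all measurable $m:(0,\infty)\to[0,A]$ with $\int_0^\infty m(s)^p\,ds \le B^p$ (the rearrangement of $|F|$). This is the crucial reduction, and it is where the hard analytic input sits; I expect the main obstacle to be making this step rigorous with sharp constants, in particular tracking exactly when the Faber–Krahn-type inequality \eqref{eq intro 1} is an equality (only for Gaussians), so that the equality analysis later goes through.

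Second, I would solve the resulting scalar optimization problem $\max \int_0^\infty m(s)e^{-s}\,ds$ subject to $0\le m\le A$ and $\|m\|_{L^p}\le B$. Since the weight $e^{-s}$ is strictly decreasing, mass should be pushed toward $s=0$. A Lagrange-multiplier / bathtub argument shows the optimizer is $m(s)=\min\{\lambda e^{-s/(p-1)},A\}$ for $p>1$ (because maximizing $\int m e^{-s}$ with an $L^p$ budget forces $m \propto (e^{-s})^{1/(p-1)}$ where the $L^\infty$ cap is inactive, i.e. $m(s)^{p-1}\propto e^{-s}$), and $m(s)=A\chi_{(0,B^p/A^p)}$ for $p=1$ (the weight $e^{-s}$ is integrable, so one simply fills the "cheapest" region up to the $L^\infty$ ceiling using the full $L^1$-budget $B/A$ of measure). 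Plugging these back in gives, after an elementary computation, exactly the constants $\kp^{\kp}B$, $A(1-e^{\kp-(B/A)^p}/p)$, and $A(1-e^{-B/A})$; the threshold $(B/A)^p=\kp$ is precisely the value at which the uncapped Gaussian $\lambda e^{-s/(p-1)}$ first hits the ceiling $A$ at $s=0$, explaining the phase transition between cases (ii) and (iii).

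Third, I would unwind the rearrangement to identify the extremal $F$. Equality in the rearrangement step forces $|F|$ to be, up to the measure-preserving identification, the radial decreasing profile $m$ composed with $s\mapsto |\cV f(z)|^2$-sublevel structure; but equality in \eqref{eq intro 1} forces $|\cV f|^2$ itself to be a (translated) Gaussian $e^{-\pi|z-z_0|^2}$ (this is the rigidity part of Theorem \ref{generalFK}). Composing the two, $|F|(z)=m(\pi|z-z_0|^2/\text{const})$, which in case (ii) is a pure Gaussian $B\kp^{-1/p}e^{-\pi|z-z_0|^2/(p-1)}$, in case (iii) a truncated Gaussian $\min\{\lambda e^{-\pi|z-z_0|^2/(p-1)},A\}$, and in case (i) a multiple of $\chi_\cB$. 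The phase $e^{i\theta}$ of $F$ must be constant because $\langle L_F f,f\rangle$ is real and maximal only when there is no cancellation. For the general-dimension statements (Section \ref{sec mult}) and the wavelet case, the same scheme applies once one has the corresponding sharp concentration inequality (with the relevant profile replacing $1-e^{-s}$); only the scalar computation changes. Finally, the characterization of the optimal pairs $(f,g)$ in \eqref{optfg}: polarize, note $|\langle L_F f,g\rangle| \le \frac12(\langle L_{|F|}f,f\rangle+\langle L_{|F|}g,g\rangle)$ via Cauchy–Schwarz on $\cV^*\!\sqrt{|F|}\cdot\sqrt{|F|}\cV$, so equality forces each of $f,g$ to be individual maximizers hence Gaussians of the form \eqref{charf}, and the alignment of their phase-space centers with $z_0$ comes from the rigidity in \eqref{eq intro 1}; Corollary \ref{thm thm2 bis} is then immediate by taking $B=\|F\|_{L^p}$, $A=\|F\|_{L^\infty}$.
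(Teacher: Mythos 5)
Your proposal is correct and follows essentially the same route as the paper: the sharp concentration inequality (Theorem~\ref{generalFK}), a layer-cake decomposition, reduction to a one-dimensional variational problem, and the rigidity analysis via the equality case of~\eqref{eq intro 1}. The one notable difference is that you phrase the intermediate variational problem in terms of the decreasing rearrangement $m=|F|^*$ (maximize $\int_0^\infty m(s)e^{-s}\,ds$ subject to $0\le m\le A$ and $\Vert m\Vert_{L^p}\le B$), whereas the paper (Theorems~\ref{thm4} and~\ref{thmcharmax}) works with the distribution function $\mu$ of $|F|$ (maximize $\int_0^A G(\mu(t))\,dt$ subject to $p\int_0^A t^{p-1}\mu(t)\,dt\le B^p$); the two are dual under the Fubini identity $\int_0^\infty G(\mu(t))\,dt=\int_0^\infty F^*(s)G'(s)\,ds$ and $\Vert F\Vert_{L^p}^p=p\int_0^\infty t^{p-1}\mu(t)\,dt=\int_0^\infty F^*(s)^p\,ds$. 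Your version has a linear objective with an $L^p$-budget constraint, which makes the Lagrange/bathtub heuristic $m(s)^{p-1}\propto e^{-s}$ and the $L^\infty$-capping at $A$ particularly transparent (the paper's Euler--Lagrange step $G'(u(t))=ct^{p-1}$ is the mirror image after inverting); to make it fully rigorous one still needs the perturbation argument of Proposition~\ref{pro pro2} and the boundary analysis around $M$ in the proof of Theorem~\ref{thmcharmax}, which you only gesture at. Minor slip: $(-d/ds)(1-e^{-s})=-e^{-s}$; you mean the weight $G'(s)=e^{-s}$ (or $(-d/ds)e^{-s}$), not its negative.
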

Some remarks are in order.
When $A=\infty$ in Theorem \ref{thm thm2}, the extremal functions $F$ have the form in \eqref{eq estr1} and $\|F\|_{L^\infty}= B \kappa_p^{1/p}$. Hence
 Theorem \ref{thm thm2} (ii) in fact reduces to the case when only the $L^p$ constraints is present. 

%Firstly, when $(\|F\|_{L^p}/\|F\|_{L^\infty})^p=(p-1)/p$,  \eqref{eq stima2} reduces to \eqref{eq stima1}. Instead, when $(\|F\|_{L^p}/\|F\|_{L^\infty})^p> (p-1)/p$, \eqref{eq stima2} represents an improvement upon \eqref{eq stima1} (cf. Remark \ref{rem fut ref}). Also, for the function $F$ in \eqref{eq stima1} we have $(\|F\|_{L^p}/\|F\|_{L^\infty})^p= (p-1)/p$, and this easily implies that no similar improvement upon \eqref {eq stima1} can hold in the regime $(\|F\|_{L^p}/\|F\|_{L^\infty})^p\leq  (p-1)/p$ (see Remark \ref{rem opt} below).

The estimate \eqref{eq stima1 bis bis} is easily seen to be \textit{equivalent}, by duality, to Lieb's uncertainty inequality for the short-time Fourier transform (with Gaussian window) \cite{lieb1978} (see also \cite{boggiatto,carlen,lieb}), which in dimension $d=1$ reads as follows: for $2\leq p<\infty$,
\begin{equation}\label{lieb}
    \|\cV f\|_{L^p(\bR^{2})}\leq (2/p)^{1/p}\|f\|_{L^2(\bR)}.
\end{equation}
Hence, case (b) in Corollary \ref{thm thm2 bis} is the most interesting. However, our proof does not rely on \eqref{lieb} and therefore provides an alternative proof of \eqref{lieb}  as well (we address to \cite{lieb1978} for applications of this estimate in quantum physics).

If $p=1$, \eqref{eq stima2 bis} reduces to the estimate
\begin{equation}\label{eq galbis}
\|L_F\|_{L^2\to L^2}\leq ||F||_{L^\infty}\big(1-e^{-\|F\|_{L^1}/\|F\|_{L^\infty}}\big),
\end{equation}
which was proved in \cite{galbis2021} under the stronger assumptions that $F$ is real valued (so that $L_F$ is self-adjoint) and spherically symmetric, exploiting the fact that in that case $L_F$ diagonalizes in the Hermite bases. Also, when $F$ is the characteristic function of a measurable subset $\Omega$ of finite measure the latter estimate reduces to \eqref{eq intro 1}. Moreover, if $F\in L^1(\mathbb{R}^2)$ and $0\leq F\leq 1$, we deduce that 
$$\|L_F\|_{L^2\to L^2}\leq 1-e^{-{\rm Tr}\, L_F}.$$
Indeed it is well-known that if $F\in L^1$ is non-negative then $L_F$ is trace class and ${\rm Tr}\, L_F=\|F\|_{L^1}$ (see e.g. \cite{wong}). 

In Theorem \ref{thm thm2} we excluded the case $p=1$, $A=\infty$, because in that case the optimal constant in \eqref{stimaC}, which is easily seen to be $C=B$, is not achieved (see Remark \ref{rem p=1}). 

We point out that the appearance of the {\it truncated} Gaussians in \eqref{eq estr2} as extremal 
functions of a time-frequency concentration problem is, to our knowledge, an unprecedented 
fact in the literature. 

The norm estimates in \eqref{eq stima0}, \eqref{eq stima1}, and \eqref{eq stima2} are obtained
through a new intermediate estimate of general interest, valid under no quantitative
restrictions on $F$, that in dimension $d=1$ reads
\begin{equation}
\label{stimanorma1}
\Vert L_F\Vert_{L^2\to L^2}\leq \int_0^\infty \left(1-e^{-\mu(t)}\right)\,dt,
\end{equation}
where $\mu$ is the distribution function of $|F|$ (see Theorem \ref{thm4} for the general
statement, and Corollary \ref{corsym} for a related symmetrization result). Then one may wonder how the right hand side of \eqref{stimanorma1} can
be further estimated if $F$ is subject to \eqref{constraints}, and this leads in a natural
way to a nonstandard variational  problem (where the decreasing function $\mu$ is the
unknown) described and completely solved in Section \ref{sec due}. Since
the solution to this variational problem is (depending on the values of $p$, $A$ and $B$) the distribution 
function of a ball, of a Gaussian, or of a truncated Gaussian, this eventually
leads to Theorem \ref{thm thm2} (and Theorem \ref{thm thm2 mult} in arbitrary dimension).

%Also, the presence of two regimes according to a critical value for  $\|F\|_{L^p}/\|F\|_{L^\infty}$ arises naturally in the variational perspective of Theorem \ref{thm thm2}. Indeed, by a slicing argument and using the concentration estimates \eqref{eq intro 1} on the superlevel sets of $|F|$, we are led to an optimization problem for the distribution function $u(t)$ of $|F|$, which will be solved first, in Section \ref{sec due}. The condition $(B/A)^p>  \kp$ in Theorem \ref{thm thm2} (iii) (in fact, the condition $(B/A)^p\ge   \kp$) then corresponds to the case when the optimal $u$ is strictly positive in $(0,A)$. 

We also notice that the above estimates could be rephrased as uncertainty principles. As an illustration, suppose that for some $\varepsilon\in (0,1)$, $f\in L^2(\bR)\setminus\{0\}$ and $F\in L^\infty(\bR^2)$, with $0\leq F\leq 1$, we have
\[
\int_{\bR^2}F(x,\omega)|\cV f(x,\omega)|^2\, dx d\omega\geq (1-\epsilon)\|f\|^2_{L^2}.
\]
Then it follows from \eqref{eq galbis} that $\|F\|_{L^1}\geq \log(1/\varepsilon)$, which generalizes \cite[Theorem 1.2]{nicola_tilli}, where the case $F=\chi_\Omega$ was considered.

The above results could also be interpreted as optimal bounds for the norm of Toeplitz operators in the Bargmann-Fock space of entire functions \cite{zhu} (cf. \cite{galbis2021}).
Also, this general scheme seems to go to the very heart of the problem and is flexible enough to encompass more general function spaces, where the norm $\|F\|_{L^p}^p$ is replaced by $\int_{\bR^{2d}} \Phi(|F(x,\omega)|)\,dxd\omega$, $\Phi$ being a non-negative convex function satisfying some natural conditions, and the objective function is also of the type $\int_{\bR^{2d}} F(x,\omega)G(|\cV f(x,\omega)|)\, dx d\omega$, providing a straightforward pattern to find optimal bounds and corresponding optimal functions. Here we chose to confine ourselves to the case of the $L^p$ setting for the sake of concreteness. However, to illustrate the scope of this approach, in Section \ref{sec wavelet} we briefly prove similar results for wavelet localization operators \cite{daubechies_book,daubechies_paul}. In that case, the Cauchy wavelet plays the role of the Gaussian window \eqref{defvarphi}, and the estimate analogous to \eqref{eq intro 1} was recently proved in \cite{joao_tilli}. 

We emphisize that the concentration problem for the wavelet transform is directly related to deep issues in the theory of Bergman spaces; for example, the bound analogous of Lieb's uncertainty inequality --which we also recapture in Theorem \ref{thm thm2 wav} (ii) below (case $A=\infty$)-- has recently appeared in \cite{kulikov} in the form of an optimal contractivity estimate for such spaces. We address the interested reader to \cite{abreu22,seip2018} for a general account of this circle of ideas, and to \cite{lieb2021} and the references therein for the intimate connection with the Wehrl conjecture by Lieb and Solovej. 

\section{The multidimensional case}\label{sec mult}
The extension of Theorem \ref{thm thm2} to arbitrary dimension $d\geq1$
requires
the introduction of the function
\begin{equation}
\label{defG}
G(s):=
%\frac{1}{(d-1)!}\int_0^{\pi(s/\bo)^{1/d}}  \tau^{d-1}e^{-\tau}\, d\tau,
\int_0^s e^{-(d! \tau)^{\frac 1 d}}\,d\tau
\end{equation}
%where $\bo$ is the volume of the unit ball in $\bR^{2d}$ 
(note that $G(s)=1-e^{-s}$ when $d=1$).
With this notation, we can now state the following
\begin{theorem}\label{thm thm2 mult}
Assume $p\in [1,\infty)$, $A\in (0,\infty]$ and $B\in (0,\infty)$, with the proviso
that $A<\infty$ if $p=1$, and let $F\in L^p(\bR^{2d})$ satisfy the constraints \eqref{constraints}.
%be such that 
%\begin{equation}
%\label{constraints}
%\Vert F\Vert_{L^\infty}\leq A\qquad\text{and}\qquad
%\Vert F\Vert_{L^p}\leq  B.
%\end{equation}
\begin{itemize}
\item[(i)] If $p=1$, then
\begin{equation}\label{eq stima2 ter 2}
 \|L_F\|_{L^2\to L^2}\leq  A \,\,G(B/A)
 \end{equation}
where $G(s)$ is as in \eqref{defG}, and equality occurs if and only if, for some $\theta\in\bR$
and some $z_0\in\rdd$,
\begin{equation}
\label{Fcasei}
F(z)=A e^{i\theta} \chi_{\cB}(z-z_0)\quad\forall z\in\rdd,
\end{equation}
where $\cB\subset\rdd$ is the ball of volume $B/A$, centered at the origin.
\item[(ii)] If $p>1$ and $(B/A)^p\leq \kp^d$, then
\begin{equation}\label{eq stima1 mult 2}
\|L_F\|_{L^2\to L^2}\leq \kp^{d\kp} B,
\end{equation}
with equality if and only if, for some $\theta\in\bR$ and some
$z_0\in\rdd$,
\begin{equation}\label{eq estr1 mult}
F(z)= e^{i \theta} \lambda e^{-\frac{\pi}{p-1}|z-z_0|^2}\qquad z\in\rdd,
\end{equation}
where $\lambda= \kp^{-d/p}B$.
\item[(iii)] If $p>1$ and $(B/A)^p > \kp^d$, then 
\begin{equation}\label{eq stima2 mult}
 \|L_F\|_{L^2\to L^2}\leq \int_0^{A}G(u_{\lambda}(t))\, dt,
\end{equation}
where $u_\lambda(t)=(-\log ((t/\lambda)^{p-1}))^d/d!$ and $\lambda>A$
is uniquely determined by the condition that $p\int_0^A t^{p-1}u_\lambda(t)\,dt=B^p$.
Equality occurs in \eqref{eq stima2 mult} if and only if, for some $\theta\in\bR$ and some
$z_0\in\rdd$,
\begin{equation}\label{eq estr2 mult 2}
F(z)=e^{i\theta} \min\{\lambda e^{-\frac{\pi}{p-1}|z-z_0|^2},A\}\qquad z\in\rdd.
\end{equation}
\end{itemize}

Finally, if $F$ achieves equality in \eqref{eq stima2 ter 2}, \eqref{eq stima1 mult 2}, or
\eqref{eq stima2 mult}, then
\begin{equation}
\label{saturF} \Vert F\Vert_{L^p} = B,
\end{equation}
and \eqref{optfg} holds
%\begin{equation}
%\label{optfg}
%\|L_F\|_{L^2\to L^2}=|\langle L_F f,g\rangle|
%\end{equation}
for some $f,g\in L^2(\bR^d)$ such that $\|f\|_{L^2}=\|g\|_{L^2}=1$, if and only if 
both $f$ and $g$ are functions of the kind \eqref{charf}, possibly with different 
$c$'s, but with the same $(x_0,\omega_0)\in\rdd$ that coincides with $z_0$ (the center of symmetry
of $|F|$).
\end{theorem}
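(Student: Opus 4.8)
\emph{Overall plan.} The idea is to deduce the theorem from two dimension-free ingredients: the intermediate estimate of Theorem~\ref{thm4}, the arbitrary-dimension form of \eqref{stimanorma1}, which reads
\[
\|L_F\|_{L^2\to L^2}\le\int_0^\infty G(\mu(t))\,dt,\qquad \mu(t):=|\{z\in\rdd:|F(z)|>t\}|,
\]
with $G$ the function in \eqref{defG}; and the complete solution of the variational problem of Section~\ref{sec due}. The transition from the case $d=1$ (Theorem~\ref{thm thm2}) to general $d$ amounts only to replacing $1-e^{-s}$ by $G(s)$, so I would begin by recording the features of $G$ that the argument uses: $G(0)=0$, $G$ is strictly increasing and strictly concave, $G'(0)=1$, $G'(s)=e^{-(d!\,s)^{1/d}}$ decreases strictly to $0$, and $G(\infty)=\tfrac{d}{d!}\Gamma(d)=1$. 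All of these follow at once from \eqref{defG}.

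\emph{Reduction to a variational problem.} Under \eqref{constraints} the distribution function $\mu$ of $|F|$ is non-increasing, vanishes on $(A,\infty)$, and satisfies $p\int_0^\infty t^{p-1}\mu(t)\,dt=\|F\|_{L^p}^p\le B^p$; conversely, every such $\mu$ is realised by some admissible $F$. Thus Theorem~\ref{thm4} reduces everything to maximising $\int_0^A G(\mu(t))\,dt$ over non-increasing $\mu\ge0$ on $[0,A]$ with $p\int_0^A t^{p-1}\mu(t)\,dt\le B^p$, which is precisely the problem of Section~\ref{sec due}. I would solve it by a tangent-line (Lagrange) argument: pointwise maximisation of $s\mapsto G(s)-c\,t^{p-1}s$ for a multiplier $c=\lambda^{1-p}>0$ yields the candidate $\mu^\ast(t)=u_\lambda(t)=\tfrac1{d!}(-\log((t/\lambda)^{p-1}))^d$ for $t<\lambda$ and $\mu^\ast(t)=0$ for $t\ge\lambda$ (degenerating, when $p=1$, to a constant on its support); strict concavity of $G$ gives $G(\mu(t))\le G(\mu^\ast(t))+c\,t^{p-1}(\mu(t)-\mu^\ast(t))$, so that integrating and using the $L^p$ constraint yields $\int G(\mu)\le\int G(\mu^\ast)$, with equality only if $\mu=\mu^\ast$ a.e. The free parameter is then fixed by saturating the $L^p$ constraint, subject to the feasibility requirement $\mathrm{supp}\,\mu^\ast\subseteq[0,A]$.

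\emph{The three regimes and their closed forms.} When $\lambda:=\kp^{-d/p}B\le A$, i.e.\ $(B/A)^p\le\kp^d$ (which includes $A=\infty$), the optimizer $u_\lambda$ is already supported in $[0,A]$; computing $\int_0^\infty G(u_\lambda(t))\,dt$ by one integration by parts and the substitution $t=\lambda e^{-s}$ reduces it to $\Gamma(d)$ and gives the value $\kp^d\lambda=\kp^{d\kp}B$, which is \eqref{eq stima1 mult 2}. When $p>1$ and $(B/A)^p>\kp^d$, feasibility forces $\lambda>A$, so $\mu^\ast$ must be $u_\lambda$ \emph{truncated} to $[0,A]$; since $\lambda\mapsto p\int_0^A t^{p-1}u_\lambda(t)\,dt$ is continuous, strictly increasing on $[A,\infty)$, equals $\kp^d A^p$ at $\lambda=A$ and tends to $\infty$, there is a unique admissible $\lambda>A$, and the bound becomes $\int_0^A G(u_\lambda(t))\,dt$, i.e.\ \eqref{eq stima2 mult}. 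When $p=1$ (hence $A<\infty$), $\mu^\ast$ is constant on $[0,A]$, equal to $B/A$ by the $L^1$ constraint, so the bound is $A\,G(B/A)$, i.e.\ \eqref{eq stima2 ter 2}. In each case I would then check that $\mu^\ast$ is exactly the distribution function of the $F$ displayed in \eqref{eq estr1 mult}, \eqref{eq estr2 mult 2}, \eqref{Fcasei} — a Gaussian, a truncated Gaussian, or a multiple of $\chi_{\cB}$ — centred at the origin, using $\int_{\rdd}e^{-p\pi|z|^2/(p-1)}\,dz=\kp^d$; this also confirms $\|F\|_{L^p}=B$ for extremal $F$ (alternatively, from strict monotonicity of the right-hand sides in $B$).

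\emph{Equality cases, and the main obstacle.} Equality in the final bound forces equality both in Theorem~\ref{thm4} and in the variational step; the latter forces $\mu=\mu^\ast$ a.e.\ and the $L^p$ constraint to be saturated. Equality in Theorem~\ref{thm4} is rigid: I expect it to force $|F|$ to be, up to translation by some $z_0\in\rdd$, a non-increasing radial profile about $z_0$, and to force the optimal pair $f,g$ in \eqref{optfg} to be time-frequency translates of $\varphi$ to $z_0$, i.e.\ of the form \eqref{charf} with common $(x_0,\omega_0)=z_0$ (possibly differing by a unimodular constant). Combining the two rigidities pins $F$ down to the asserted forms up to a global phase $e^{i\theta}$. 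For the converse, when $f,g$ are such translates one computes that $\overline{\cV f}\,\cV g$ equals a positive constant times $e^{-\pi|\cdot-z_0|^2}$ times a single unimodular factor — the cross-ambiguity of two Gaussians centred at the same phase-space point has constant phase — so that $|\langle L_F f,g\rangle|=\int_{\rdd}|F(z)|\,e^{-\pi|z-z_0|^2}\,dz$ attains the sharp value, while any other choice of $f$ or $g$ is strictly worse by the rigidity of \eqref{eq intro 1}. The \textbf{main obstacle} is exactly this rigidity input: extracting from equality in Theorem~\ref{thm4} both the radial-decreasing structure of $|F|$ about a single point \emph{and} the Gaussian form of $f,g$. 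This rests on the sharp rigidity of the Faber--Krahn-type inequality \eqref{eq intro 1} (Theorem~\ref{generalFK}) and on propagating equality through the layer-cake decomposition $|F|=\int_0^\infty\chi_{\{|F|>t\}}\,dt$ underlying Theorem~\ref{thm4}, which is the delicate point of the whole scheme.
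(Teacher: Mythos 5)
Your proposal is correct and follows the paper's overall architecture: reduce to $\int_0^A G(\mu(t))\,dt$ via Theorem~\ref{thm4}, solve a variational problem over distribution functions, then propagate the rigidity statement of Theorem~\ref{thm4} through the equality chain. The genuine difference lies in how you solve the variational problem. The paper's Section~\ref{sec due} is a three-step affair: compactness (Helly's selection theorem) to get existence of a maximizer in $\cC$; a rearrangement argument (Proposition~\ref{pro pro2}) to pass from the nonmonotone class $\cC'$ back to $\cC$, needed because the Euler--Lagrange perturbation $u+\eps\eta$ may leave $\cC$; and then a perturbation/Euler--Lagrange analysis, including a separate one-sided variation at the free boundary $M$ to show $\lambda=M$ when $M<A$. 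Your tangent-line (Lagrangian duality) argument replaces all three steps at once: by strict concavity of $G$ one has $G(\mu(t))\le G(\mu^\ast(t))+c\,t^{p-1}\bigl(\mu(t)-\mu^\ast(t)\bigr)$ pointwise (the case $\mu^\ast(t)=0$, $c\,t^{p-1}\ge 1$, being handled by $G(s)\le s\le c\,t^{p-1}s$), and integrating against the saturated constraint gives $\int G(\mu)\le\int G(\mu^\ast)$ with equality iff $\mu=\mu^\ast$ a.e. This is verification rather than derivation, so it needs the explicit candidate $u_\lambda$ up front, but it produces existence, uniqueness and optimality in one stroke, without compactness, without rearrangement, and without the delicate free-boundary analysis at $M$. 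You do still rely, as you correctly flag, on the full rigidity of Theorem~\ref{thm4} (and hence Theorem~\ref{generalFK}) for the characterization of extremal $F$ and of optimal $f,g$; that part is identical to the paper's. One small caveat: you should note explicitly that the strict monotonicity of $\lambda\mapsto p\int_0^A t^{p-1}u_\lambda\,dt$ and the value $\kp^d A^p$ at $\lambda=A$ are what make the dichotomy $(B/A)^p\lessgtr\kp^d$ exhaustive and the constant $c=\lambda^{1-p}$ uniquely determined; you gesture at this but it is where the $A=\infty$ case (drop the $L^\infty$ constraint) is absorbed into regime~(ii).
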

The proof of Theorem \ref{thm thm2 mult}, which will be given Section \ref{secdim},
partially relies on the following result, which is of standalone interest
and provides an explicit bound for the norm of
the operator $L_F$, in terms of the distribution function of $|F|$.

\begin{theorem}
\label{thm4}
Assume $F\in L^p(\bR^{2d})$ for some $p\in [1,+\infty)$, and let 
\begin{equation}
\label{defmu}
\mu(t)=\left\vert\left\{ |F|>t\right\}\right\vert,\quad t>0,
\end{equation}
denote the distribution function of $|F|$. Then
\begin{equation}
\label{stimanorma}
\Vert L_F\Vert_{L^2\to L^2}\leq \int_0^\infty G\bigl(\mu(t)\bigr)\,dt
\end{equation}
where $G$ is as in \eqref{defG},  in particular when $d=1$
\begin{equation*}
%\label{stimanorma1}
\Vert L_F\Vert_{L^2\to L^2}\leq \int_0^\infty \left(1-e^{-\mu(t)}\right)\,dt.
\end{equation*}
Equality occurs in \eqref{stimanorma} if and only if $F(z)=e^{i\theta} \rho(|z-z_0|)$ 
for some $\theta\in\bR$, some $z_0\in\rdd$, and some nonincreasing function $\rho:[0,+\infty)\to[0,+\infty)$.
% someis (up to multiplication by a unimodular
%constant) nonnegative, radially symmetric around some $z_0\in \bR^{2d}$, and radially decreasing.
In this case,  \eqref{optfg} holds true
%\begin{equation}
%\label{fandg}
%\left\vert\langle L_F f,g\rangle\right\vert =\Vert L_F\Vert_{L^2\to L^2}
%\end{equation}
for some $f,g\in L^2(\bR^d)$ such that $\Vert f\Vert_{L^2}=\Vert g\Vert_{L^2}=1$, if (and only
if, when  $F$ is not identically zero) both $f$ and $g$ are functions of the kind \eqref{charf}, possibly with different 
$c$'s, but with the same $(x_0,\omega_0)$ that coincides with $z_0$.
\end{theorem}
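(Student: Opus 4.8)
The plan is to combine the layer‑cake representation of $|F|$ with the Cauchy--Schwarz inequality and the sharp Faber--Krahn-type estimate for the short-time Fourier transform supplied by Theorem \ref{generalFK}. For the upper bound \eqref{stimanorma}, I would start from the weak form \eqref{eq def LFbis}: for arbitrary $f,g\in L^2(\bR^d)$ with $\|f\|_{L^2}=\|g\|_{L^2}=1$, using that $\cV$ is an isometry and the identity $|F|=\int_0^\infty\chi_{\{|F|>t\}}\,dt$ together with Tonelli,
\[
|\langle L_F f,g\rangle|=\Big|\int_{\rdd}F\,\overline{\cV f}\,\cV g\Big|
\le\int_{\rdd}|F|\,|\cV f|\,|\cV g|
=\int_0^\infty\Big(\int_{\{|F|>t\}}|\cV f|\,|\cV g|\Big)\,dt .
\]
Each super-level set $\{|F|>t\}$ has finite measure $\mu(t)\le t^{-p}\|F\|_{L^p}^p$, and Cauchy--Schwarz on it followed by Theorem \ref{generalFK} applied to $\cV f$ and to $\cV g$ (recall $\|\cV f\|_{L^2}=\|\cV g\|_{L^2}=1$) gives
\[
\int_{\{|F|>t\}}|\cV f|\,|\cV g|
\le\Big(\int_{\{|F|>t\}}|\cV f|^2\Big)^{1/2}\Big(\int_{\{|F|>t\}}|\cV g|^2\Big)^{1/2}
\le G(\mu(t)).
\]
Integrating in $t$ and taking the supremum over $f,g$ yields \eqref{stimanorma} (the case $d=1$ being immediate since $G(s)=1-e^{-s}$).

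Next I would analyze equality; we may assume $F\not\equiv0$, the other case being trivial. Since $L_F$ is compact for $F\in L^p(\rdd)$ with $p<\infty$, its operator norm is attained, say $\|L_F\|_{L^2\to L^2}=|\langle L_F f,g\rangle|$ with $\|f\|_{L^2}=\|g\|_{L^2}=1$; if equality holds in \eqref{stimanorma}, this pair saturates the entire chain above. Equality in the last two steps forces, for a.e.\ $t>0$ with $\mu(t)>0$, that $\int_{\{|F|>t\}}|\cV f|^2=\int_{\{|F|>t\}}|\cV g|^2=G(\mu(t))$, i.e.\ that both $f$ and $g$ realize equality in Theorem \ref{generalFK} with $\Omega=\{|F|>t\}$. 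By the equality characterization in that theorem, $f$ and $g$ are then Gaussians of the type \eqref{charf}, and $\{|F|>t\}$ coincides up to a null set with a ball of volume $\mu(t)$ centered at the phase-space center of $f$, and also at that of $g$; since a ball has a unique center and at least one such level set has positive volume, these two centers must coincide with a common point $z_0$, independent of $t$. Hence almost every super-level set $\{|F|>t\}$ is, up to a null set, a ball centered at $z_0$, so $|F(z)|=\rho(|z-z_0|)$ a.e.\ for some nonincreasing $\rho:[0,\infty)\to[0,\infty)$. Finally, equality in the first step, $|\int F\,\overline{\cV f}\,\cV g|=\int|F|\,|\cV f|\,|\cV g|$, means that $F\,\overline{\cV f}\,\cV g$ has constant argument where it is nonzero; writing $f=c_f\psi$, $g=c_g\psi$ with the \emph{same} $\psi(x)=e^{2\pi i x\cdot\omega_0}\varphi(x-x_0)$ and $z_0=(x_0,\omega_0)$, we have $\overline{\cV f}\,\cV g=\overline{c_f}c_g\,|\cV\psi|^2$, a fixed unimodular constant times a nonnegative function, so $F$ itself has constant argument $e^{i\theta}$ and $F(z)=e^{i\theta}\rho(|z-z_0|)$.

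Conversely, suppose $F(z)=e^{i\theta}\rho(|z-z_0|)$ with $\rho$ nonincreasing, and let $f=c_f\psi$, $g=c_g\psi$ be any unit functions of the form \eqref{charf} with common center $z_0$. Then $|\cV\psi(z)|^2=e^{-\pi|z-z_0|^2}$ (the standard expression for the STFT of a time--frequency shifted Gaussian), so $\overline{\cV f}\,\cV g=\overline{c_f}c_g\,e^{-\pi|z-z_0|^2}$ and
\[
\langle L_F f,g\rangle=e^{i\theta}\overline{c_f}c_g\int_{\rdd}\rho(|z-z_0|)\,e^{-\pi|z-z_0|^2}\,dz .
\]
By the layer-cake formula and the identity $\int_{B}e^{-\pi|z-z_0|^2}\,dz=G(|B|)$ valid for balls $B$ centered at $z_0$ (which is exactly the equality case of Theorem \ref{generalFK}), the last integral equals $\int_0^\infty G(\mu(t))\,dt$, whence $|\langle L_F f,g\rangle|=\int_0^\infty G(\mu(t))\,dt$. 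In view of \eqref{stimanorma} this shows simultaneously that equality holds in the theorem, that every such pair $(f,g)$ satisfies \eqref{optfg}, and --- together with the previous paragraph, which gives that (when $F\not\equiv0$) any $(f,g)$ satisfying \eqref{optfg} must be of this form --- the stated characterization of the extremal pairs.

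The main obstacles I anticipate are: (a) turning ``almost every super-level set is a ball centered at a fixed $z_0$'' into ``$|F|$ equals a.e.\ a radially nonincreasing function about $z_0$'', which is a routine but careful measure-theoretic step (pick a countable dense set of good levels and use the nestedness of super-level sets); and (b) the precise invocation of the equality case of Theorem \ref{generalFK}, whose real content is the rigidity statement that a \emph{fixed} $f$ can saturate the Faber--Krahn bound for a family of sets only if those sets are concentric balls centered at the phase-space center of $f$ --- it is this single fact that simultaneously pins down the radial profile of $F$ and the Gaussian shape of the extremizers. The appeal to compactness of $L_F$ (used to produce honest extremizers) could, if desired, be replaced by a direct extraction along a maximizing sequence $(f_n,g_n)$.
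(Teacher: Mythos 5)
Your proof is correct and follows essentially the same strategy as the paper: bound $|\langle L_Ff,g\rangle|$ by $\int|F||\cV f||\cV g|$, combine Cauchy--Schwarz with the layer-cake representation, apply the sharp Faber--Krahn bound of Theorem \ref{generalFK} level set by level set, and use the rigidity statement in that theorem to pin down the equality case. The only cosmetic difference is the order of operations — you apply layer cake first and then Cauchy--Schwarz on each superlevel set, whereas the paper applies Cauchy--Schwarz globally with weight $|F|$ and then slices each factor; both orders give the identical bound $\int_0^\infty G(\mu(t))\,dt$, and the equality analysis is unaffected.
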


To prove Theorem \ref{thm4}, we need the following result (\cite[Theorem 1.1]{nicola_tilli}).
\begin{theorem}[\cite{nicola_tilli}]\label{generalFK}
For every $f\in L^2(\bR^d)$ such that $\|f\|_{L^2}=1$, and every measurable subset $\Omega\subset\mathbb{R}^{2d}$ of measure $|\Omega|<\infty$, we have
\begin{equation}
\label{FKgen}
\int_{\Omega} |\cV f(x,\omega)|^2\, dxd\omega\leq G\bigl(|\Omega|\bigr)
\end{equation}
where $G$ is as in \eqref{defG},  in particular when $d=1$
\begin{equation}
\label{FKdim1}
\int_{\Omega} |\cV f(x,\omega)|^2\, dxd\omega\leq 1-e^{-|\Omega|}.
\end{equation}
Equality occurs in \eqref{FKgen} (for some set $\Omega$ such that $0<|\Omega|<\infty$ and some
$f$ such that $\|f\|_{L^2}=1$)
if and only if $f$ is a function of the form as in \eqref{charf}
%\begin{equation}
%\label{charf}
%x\mapsto c e^{2\pi i x\cdot \omega_0} \varphi(x-x_0),\quad x\in\bR^d,
%\end{equation}
%for some $c\in\bC$ with $|c|=1$ and some $(x_0,\omega_0)\in\bR^d\times\bR^d$, 
and $\Omega$ is equivalent, in measure, to a ball of center $(x_0,\omega_0)$.
%, where $\varphi$ is the Gaussian in \eqref{defvarphi}.
\end{theorem}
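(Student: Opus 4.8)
The plan is to work on the Bargmann side, reduce the inequality to superlevel sets of $|\cV f|^2$, and then run a symmetrization argument whose engine is a one‑dimensional ODE comparison.

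\emph{Setup and reduction.} Write $u:=|\cV f|^2$. Three structural facts are needed. First, since $\cV$ is an isometry, $\int_\rdd u\,dz=\|f\|_{L^2}^2=1$, and since $|\cV f(z)|$ is the modulus of the inner product of $f$ with a unit vector (a time–frequency shift $\pi(z)\varphi$ of $\varphi$), one has $0\le u\le 1$. Second, by the standard identification with the Bargmann transform, $u(z)=e^{-\pi|z|^2}|F(z)|^2$ for an entire function $F$ on $\bC^d\cong\rdd$, not identically zero since $f\ne 0$; hence $u$ is a non‑constant real‑analytic function (so every level set is Lebesgue‑null), $u>0$ a.e., $\sup u=\max u=:M\le 1$ is attained, $|\{u=M\}|=0$, and — because $\log|F|^2$ is pluriharmonic, hence harmonic, where $F\ne 0$, while $\Delta(-\pi|z|^2)=-4\pi d$ on $\rdd$ — one has $\Delta\log u\equiv -4\pi d$ on the open set $\{u>0\}$, in particular on each $\{u>t\}$, $t>0$. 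Third, when $f=c\,\pi(z_0)\varphi$ one computes $u(z)=e^{-\pi|z-z_0|^2}$, for which $\{u>t\}$ is a ball and $\int_{\{u>t\}}u=G(|\{u>t\}|)$; this will be the equality case. By the Hardy–Littlewood inequality $\int_\Omega u\le\int_0^{|\Omega|}u^\ast$, and since $\int_{\{u>t\}}u=\int_0^{\mu(t)}u^\ast$ (layer cake) where $\mu(t):=|\{u>t\}|$ is absolutely continuous and maps $(0,M)$ strictly decreasingly onto $(0,\infty)$, it suffices to prove $\int_{\{u>t\}}u\le G(\mu(t))$ for all $t\in(0,M)$.

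\emph{A differential inequality for $\mu$.} For a.e.\ regular value $t\in(0,M)$ the set $\{u=t\}$ is a compact smooth hypersurface, and I would combine: (a) the coarea formula $-\mu'(t)=\int_{\{u=t\}}|\nabla u|^{-1}\,d\mathcal H^{2d-1}$; (b) the divergence theorem for $\nabla\log u$ on $\{u>t\}$, which using $\Delta\log u=-4\pi d$ gives $\int_{\{u=t\}}|\nabla u|\,d\mathcal H^{2d-1}=4\pi d\,t\,\mu(t)$; (c) Cauchy–Schwarz, $\mathrm{Per}(\{u>t\})^2\le\bigl(\int_{\{u=t\}}|\nabla u|\bigr)\bigl(\int_{\{u=t\}}|\nabla u|^{-1}\bigr)$; and (d) the isoperimetric inequality in $\rdd$, $\mathrm{Per}(\{u>t\})^2\ge 4\pi d^2(d!)^{-1/d}\mu(t)^{(2d-1)/d}$, the constant reflecting $|B_1^{2d}|=\pi^d/d!$. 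Together these yield $-t\mu'(t)\ge d\,(d!)^{-1/d}\mu(t)^{(d-1)/d}$. The decisive point is that in the variable $P(t):=(d!\,\mu(t))^{1/d}$ this collapses to the clean inequality $-t\,P'(t)\ge 1$, so that $\gamma(t):=P(t)+\log t$ is nonincreasing on $(0,M)$, with $\gamma(M^-)=\log M\le 0$ (using $\mu(M^-)=|\{u=M\}|=0$). For $d=1$ no substitution is needed ($P=\mu$, $G(s)=1-e^{-s}$), and the Gaussian profile above has $P(t)=-\log t$, $\gamma\equiv 0$, so every inequality used is sharp there.

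\emph{The comparison.} Set $g(t):=\int_0^t\mu(\sigma)\,d\sigma-t\mu(t)$ and $h(t):=1-G(\mu(t))=\int_{\mu(t)}^\infty e^{-(d!\sigma)^{1/d}}\,d\sigma$ (using $\int_0^\infty e^{-(d!\sigma)^{1/d}}\,d\sigma=1$). By the layer–cake formula $\int_{\{u>t\}}u=1-g(t)$, so the goal is $g\ge h$ on $(0,M)$. Both functions are absolutely continuous, run from $0$ at $t=0^+$ to $1$ at $t=M^-$, and a short computation gives $g'(t)-h'(t)=-\mu'(t)\bigl(t-e^{-P(t)}\bigr)$, whose sign is that of $P(t)+\log t=\gamma(t)$. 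Since $\gamma$ is nonincreasing and $\le 0$ near $M$, there is $t^\ast\in[0,M]$ with $\gamma\ge 0$ on $(0,t^\ast)$ and $\gamma\le 0$ on $(t^\ast,M)$; hence $g-h$ is nondecreasing then nonincreasing and, vanishing at both endpoints, is $\ge 0$ throughout. This gives $\int_{\{u>t\}}u\le G(\mu(t))$, and combined with Hardy–Littlewood, $\int_\Omega u\le G(|\Omega|)$.

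\emph{Equality, and the main obstacle.} In the equality case one must have equality in Hardy–Littlewood, so $\Omega$ coincides, up to a null set, with a superlevel set $\{u>t_0\}$ (recall $|\{u=t_0\}|=0$), and then $g(t_0)=h(t_0)$; by the unimodal structure above this forces $\gamma\equiv 0$, hence equality in (c) and (d), on a one‑sided neighborhood of $t_0$. Thus the relevant superlevel sets are concentric balls, $|\nabla u|$ is constant on each bounding sphere, and there $\mu$ equals the Gaussian distribution function $s\mapsto(-\log s)^d/d!$; this pins $u$ down to $e^{-\pi|z-z_0|^2}$ on an open set, hence everywhere by real‑analyticity. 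Then $|\cV f|=|\cV(\pi(z_0)\varphi)|$, and since the Bargmann transform of $\pi(z_0)\varphi$ is zero‑free, the ratio of the two Bargmann transforms is a unimodular constant, i.e.\ $f$ has the form \eqref{charf} with $(x_0,\omega_0)=z_0$, while $\Omega$ is a ball centered at $z_0$; the converse is the computation recorded in the first paragraph. I expect the genuinely delicate point to be the passage in the second paragraph: for $d>1$ a direct comparison of $\mu$ with the Gaussian distribution function is circular, and it is the substitution $P=(d!\,\mu)^{1/d}$ that makes the $2d$‑dimensional isoperimetric inequality ``close up'' to $-tP'\ge 1$; a secondary burden is the regularity package (Sard's theorem, the coarea formula, absolute continuity of $\mu$, validity of the divergence theorem on $\{u>t\}$) needed to justify (a)–(d) for a.e.\ $t$.
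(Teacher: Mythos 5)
This theorem is not proved in the present paper; it is imported verbatim from \cite{nicola_tilli}, and your argument is in all essentials a faithful reconstruction of the proof given there: the reduction to $u=|\cV f|^2$ via the Bargmann transform with $\Delta\log u=-4\pi d$ off the zero set of the entire factor, the chain coarea formula--divergence theorem--Cauchy--Schwarz--isoperimetric inequality yielding $-t\,\mu'(t)\ge d\,(d!)^{-1/d}\mu(t)^{(d-1)/d}$, and the comparison of $\int_{\{u>t\}}u$ with $G(\mu(t))$ through the monotonicity of $(d!\,\mu(t))^{1/d}+\log t$, together with the equality analysis via equality in Hardy--Littlewood and in the isoperimetric step, are exactly the steps of that proof. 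The items you explicitly defer (absolute continuity of $\mu$, negligibility of the critical set, validity of the divergence theorem on superlevel sets, and concentricity of the level balls in the equality case) are precisely the technical points the reference settles by standard means, so I see no substantive gap.
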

We warn the reader that in \cite{nicola_tilli} \eqref{FKgen} is stated in a different
way, namely with $G(s)$ defined as
\[
G(s)=\frac 1 {(d-1)!} \int_0^{\pi(s/\omega_{2d})^{1/d}} t^{d-1}e^{-t}\,dt,
\]
where $\omega_{2d}$ is the volume of the unit ball in $\rdd$. Since  $\omega_{2d}=\pi^d/d!$,
by the change of variable $t^d=d!\,\tau$ one can easily check that this definition coincides with
that in \eqref{defG}.

\begin{proof}[Proof of Theorem \ref{thm4}] Given $f$ and $g$ normalized in $L^2(\bR^d)$, writing $z$ to denote the
variable $(x,\omega)\in\rdd$ and abbreviating $dz$ for $dxd\omega$, we have, by \eqref{eq def LFbis}, 
\begin{equation}
\label{eqeq31}
\begin{aligned}
&%\|L_F\|_{L^2\to L^2}=
\left\vert \langle L_F f,g\rangle\right\vert \leq \int_{\mathbb{R}^{2d}}|F(z)|\cdot
|\cV f(z)|\cdot |{\cV g(z)}|\, dz\\
&\leq \Big(\int_{\mathbb{R}^{2d}}|F(z)|\cdot |\cV f(z)|^2\, dz\Big)^{1/2}\Big(\int_{\mathbb{R}^{2d}}|F(z)|\cdot|\cV g(z)|^2\, dz\Big)^{1/2}.
\end{aligned}
\end{equation}
Letting $m=\mathop{\rm ess\, sup}|F(z)|$ (and assuming $m>0$, otherwise the claims to be proved
are trivial),
we apply the ``layer cake" formula (cf. \cite[Page 26]{liebloss})
\begin{equation}
\label{lcake}
|F(z)|=\int_0^{m}\chi_{\{|F|>t\}}(z)\, dt,\qquad z\in\rdd,
\end{equation}
and use \eqref{FKgen} (with $\Omega=\{|F|>t\}$) to estimate
\begin{equation}
\label{stima1}
\begin{aligned}
\int_{\mathbb{R}^{2d}}|F(z)|\cdot|\cV f(z)|^2\, dz&= \int_0^{m}
\left(\int_{\{|F|>t\}}|\cV f(z)|^2\, dz\right)\,dt\\
& 	\leq \int_0^{m} G(\mu(t))\, dt
\end{aligned}
\end{equation}
(since $G(0)=0$, the last integral coincides with the one in \eqref{stimanorma}, by the definition of $m$).
Since the same argument, with $g$ in place of $f$, leads to
\begin{equation}
\label{stima2}
\int_{\mathbb{R}^{2d}}|F(z)|\cdot|\cV g(z)|^2\, dz
 	\leq \int_0^{m} G(\mu(t))\, dt,
\end{equation}
from \eqref{eqeq31} and the arbitrariness of $f$ and $g$
we obtain \eqref{stimanorma}. 
In fact, since $p<\infty$,   $L_F$ is a compact operator on $L^2(\bR^d)$ (see \cite{wong}), 
hence there exist $f,g$ (normalized in $L^2(\mathbb{R}^d)$) satisfying \eqref{optfg},
so that having equality in \eqref{stimanorma} is equivalent to having equality (with these $f$ and $g$)
in \eqref{eqeq31}, \eqref{stima1}
and \eqref{stima2}:
we now characterize for what $F$, $f$ and $g$
this really occurs.

%First, by Theorem \ref{thm thm1}, equality in \eqref{stima23} occurs
%if and only if $\mu(t)=u(t)$ for every $t\in (0,A)$, where $u$ is one of the functions defined
%in \eqref{ucasoa}, in \eqref{ucasoa2} or in \eqref{ucasob} (depending on $p$, $A$ and $B$).
%Since $\mu(t)=0$ for $t\geq A$, this completely determines $\mu$, which in particular
%is continuous in $(0,A)$. 

First,
equality occurs in \eqref{stima1} if
and only if
\begin{equation}
\label{eqt}
\int_{\{|F|>t\}}|\cV f(z)|^2\, dz= G(\mu(t))
\end{equation}
for a.e. $t\in (0,m)$. When this happens, the validity of \eqref{eqt} for just \emph{one} $t_0\in (0,m)$
is enough, by Theorem \ref{generalFK}, to infer
that $f$ has the form as in \eqref{charf}
(for some $(x_0,\omega_0)\in\mathbb{R}^{2d}$ and
some unimodular $c$), and that the corresponding level set 
$\{|F|>t_0\}$ is (equivalent to) a ball centered at $z_0:=(x_0,\omega_0)$; then,
still by Theorem \ref{generalFK}, the fact that \eqref{eqt} holds  for a.e. $t\in (0,m)$
implies ($f$ being the same) that also all the other levels sets $\Omega_t=\{|F|>t\}$ are equivalent to
balls centered at the same $z_0$ (once this is known for a.e. $t\in (0,m)$, the passage
to \emph{every} $t\in (0,m)$ follows since $\Omega_t=\bigcup_{s>t}\Omega_s$).
These necessary conditions on $f$ and $F$ are, in turn, also sufficient to guarantee \eqref{eqt}
for a.e. $t\in (0,m)$. Thus, summing up, equality in \eqref{stima1} is equivalent
to $f$ being as in \eqref{charf} and $|F(z)|$ being sphericall symmetric (and radially
decreasing) around $z_0$, i.e. $|F(z)|=\rho(|z-z_0|)$ for some nonincreasing function
$\rho:[0,+\infty)\to[0,+\infty)$.

Equality in \eqref{stima2} can be characterized in a similar fashion, so that
also $g$ must be as in \eqref{charf}
(possibly with a different $c$, but with same $z_0$ 
because $F$ is the same).

Therefore, as soon as equality occurs in \eqref{stima1} and \eqref{stima2},
we have $\cV g(z)=e^{i\alpha}\cV f(z)$ for some $\alpha\in\bR$, so that the second inequality in \eqref{eqeq31}
is automatically an equality, while
the first inequality, which explicitly amounts to
\[
\left|  \int_{\mathbb{R}^{2d}}F(z)\,
\cV f(z)\, \overline{\cV g(z)}\, dz
\right|
\leq
\int_{\mathbb{R}^{2d}}|F(z)|\cdot
|\cV f(z)|\cdot |{\cV g(z)}|\, dz,
\]
becomes an equality if and only if
\[
e^{-i\theta} \int_{\mathbb{R}^2}F(z) \cdot |\cV f(z)|^2\, dz= 
\int_{\mathbb{R}^2}|F(z)|\cdot |\cV f(z)|^2\, dz
\]
for some $\theta\in\bR$. This, in turn, is equivalent to the condition
\[
e^{-i\theta}\, F(z)\cdot |\cV f(z)|^2= |F(z)|\cdot |\cV f(z)|^2\qquad \text{for a.e. $z\in\rdd$,}
\]
but since $|\cV f(z)|^2>0$ (note that $|\cV f(z)|^2$ is a Gaussian when $f$ is as in \eqref{charf}) 
we see that equalities occur in \eqref{eqeq31}, for $f$ and $g$ as in \eqref{charf}, if and only if
$F(z)=e^{i\theta} |F(z)|$ (i.e. $F(z)=e^{i\theta}\rho(|z-z_0|)$) for a.e. $z\in\rdd$. 
\end{proof}
An interesting consequence is the following symmetrization result, which shows that
the norm of the localization operator $L_F$ increases, when $F$ is replaced by its
Schwarz symmetrization (we refer to \cite{librosymm} for a general account on
symmetrization).
\begin{corollary}\label{corsym}
Assume $F\in L^p(\bR^{2d})$ for some $p\in [1,+\infty)$, and let 
$F^*$ denote the Schwarz symmetrization of $|F|$. Then
\begin{equation}
\label{stimanormasym}
\Vert L_F\Vert_{L^2\to L^2}\leq
\Vert L_{F^*}\Vert_{L^2\to L^2},
\end{equation}
and equality occurs if and only if $F(z)=e^{i\theta} F^*(z-z_0)$ for some $\theta\in\bR$
and some $z_0\in\bR^{2d}$.
\end{corollary}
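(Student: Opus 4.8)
The plan is to read this off directly from Theorem \ref{thm4}, combined with the fact that $F^*$ is equimeasurable with $|F|$. First I would record that, by definition of the Schwarz symmetrization, $F^*$ has the same distribution function $\mu$ (as in \eqref{defmu}) as $|F|$, and that $F^*\in L^p(\rdd)$ with $\|F^*\|_{L^p}=\|F\|_{L^p}$, so that Theorem \ref{thm4} is applicable both to $F$ and to $F^*$.

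Next, applying \eqref{stimanorma} to $F$ gives $\|L_F\|_{L^2\to L^2}\le \int_0^\infty G(\mu(t))\,dt$. On the other hand $F^*$ is, by construction, of the form $\rho(|z|)$ with $\rho:[0,\infty)\to[0,\infty)$ nonincreasing, i.e. exactly the type of function for which Theorem \ref{thm4} asserts that \eqref{stimanorma} is an \emph{equality} (take $\theta=0$ and $z_0=0$); hence $\|L_{F^*}\|_{L^2\to L^2}=\int_0^\infty G(\mu(t))\,dt$, using once more that the distribution function of $F^*$ is $\mu$. Comparing the two relations yields \eqref{stimanormasym}.

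For the equality case, I would observe that $\|L_F\|_{L^2\to L^2}=\|L_{F^*}\|_{L^2\to L^2}$ is equivalent to equality holding in \eqref{stimanorma} for $F$, which by the characterization in Theorem \ref{thm4} occurs precisely when $F(z)=e^{i\theta}\rho(|z-z_0|)$ for some $\theta\in\bR$, some $z_0\in\rdd$, and some nonincreasing $\rho$. To conclude I would note that such an $F$ automatically satisfies $F(z)=e^{i\theta}F^*(z-z_0)$ for a.e. $z$: indeed $|F(\cdot+z_0)|=\rho(|\cdot|)$ is a radial, nonincreasing function with distribution function $\mu$, hence agrees almost everywhere with the symmetric decreasing rearrangement $F^*$, the latter being, up to modification on a null set, the unique radial nonincreasing function with that distribution function. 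The converse implication is immediate: if $F(z)=e^{i\theta}F^*(z-z_0)$ then $|F|$ is radially nonincreasing about $z_0$, so Theorem \ref{thm4} gives equality in \eqref{stimanorma} for $F$, whence $\|L_F\|_{L^2\to L^2}=\|L_{F^*}\|_{L^2\to L^2}$.

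The only delicate point is this last identification of $\rho(|\cdot-z_0|)$ with $F^*(\cdot-z_0)$, which relies solely on the essential uniqueness of the symmetric decreasing rearrangement among equimeasurable radial nonincreasing functions; the rest of the argument is a mechanical combination of Theorem \ref{thm4} applied to $F$ and to $F^*$ and of the equimeasurability of $F^*$ with $|F|$.
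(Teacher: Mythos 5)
Your proposal is correct and follows essentially the same route as the paper: apply Theorem \ref{thm4} to both $F$ and $F^*$, use that $F^*$ is radial nonincreasing (hence attains equality in \eqref{stimanorma}) and has the same distribution function as $|F|$, and then in the equality case invoke essential uniqueness of the symmetric decreasing rearrangement to identify $\rho(|\cdot-z_0|)$ with $F^*(\cdot-z_0)$. The paper's proof is just a slightly more compressed version of the same argument.
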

\begin{proof}
Let $\mu(t)$ be defined as in \eqref{defmu}, and observe that $\mu$ is also the distribution
function of $F^*$. Applying Theorem \ref{thm4} to $F^*$, we get the bound
\eqref{stimanorma} with $F^*$ in place of $F$. But since $F^*(z)$ is, by construction,
of the form $\rho(|z|)$ where $\rho:[0,+\infty)\to[0,+\infty)$ is nonincreasing,
by Theorem \ref{thm4} this bound is in fact an equality, i.e.
\begin{equation}
\label{boundF*}
\Vert L_{F^*}\Vert_{L^2\to L^2}= \int_0^\infty G\bigl(\mu(t)\bigr)\,dt,
\end{equation}
which combined with \eqref{stimanorma} yields \eqref{stimanormasym}. Equality, by
Theorem \ref{thm4}, occurs therein if and only if $F(z)=e^{i\theta} \tilde\rho(|z-z_0|)$, for 
some $\theta\in\bR$, some
$z_0\in\bR^{2d}$, and some nonincresing function $\tilde\rho:[0,+\infty)\to[0,+\infty)$.
Finally, since $\tilde\rho(|z-z_0|)=|F(z)|$  has the same distribution function as $\rho(|z|)=F^*(z)$, 
we see that $\tilde\rho=\rho$, hence equality in \eqref{stimanormasym} is equivalent
to $F(z)=e^{i\theta}F^*(z-z_0)$
for a.e. $z\in\bR^{2d}$.
\end{proof}

\section{A nonstandard variational problem}\label{sec due}

To prove Theorems \ref{thm thm2 mult} and \ref{thm thm2}
we shall build on Theorem \ref{thm4}, seeking sharp upper bounds for the right hand side of \eqref{stimanorma}
when $F$ is subject to the double constraint in \eqref{constraints}. Given $p$, $A$ and $B$
as in Theorem \ref{thm thm2 mult}, based on the integral constraint
\begin{equation}\label{eq due}
p\int_0^{A} t^{p-1} u(t) \, dt\leq B^p
\end{equation}
we define the class of functions
\begin{equation}
\label{defC}
\cC=\left\{ u:(0,A)\to[0,+\infty)\,|\,\,\text{$u$ is descreasing and satisfies \eqref{eq due}}\right\}.
\end{equation}
If $\mu$ is the distribution function of $|F|$ as in \eqref{defmu}, where
$F\in L^p(\rdd)$ and satisfies \eqref{constraints},
the $L^\infty$ constraint
can be expressed as
\begin{equation}
\label{boundmu}
\mu(t)=0\quad\forall t\geq A,
\end{equation}
while the $L^p$ constraint corresponds to letting $u=\mu$ in \eqref{eq due}.
Hence, since $\mu$ is decreasing, we see that $\mu$ (restricted to the relevant interval $(0,A)$)
belongs to the class $\cC$ as defined above,
and to estimate the right-hand side of \eqref{stimanorma} 
it is natural to investigate the variational problem
\begin{equation}\label{eq uno}%anche \label{eq tre}
\sup_{v\in\cC}I(v)\quad\text{where}\quad
    I(v):= \int_0^{A} G(v(t))\, dt
\end{equation}
(integration in
\eqref{eq uno} can be restricted to $(0,A)$ due to \eqref{boundmu}, since $G(0)=0$).
We first show the existence of an extremal function.
\begin{proposition}\label{pro pro1}
The supremum in \eqref{eq uno} is finite and is attained by at least one function $u\in\cC$.
Moreover, every extremal function $u$ achieves equality in the constraint  \eqref{eq due}.
\end{proposition}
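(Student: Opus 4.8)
The plan is to prove existence by the direct method in the calculus of variations, working with the distribution functions (decreasing functions on $(0,A)$) as the admissible class. First I would establish that the supremum is finite: since $G$ is bounded (indeed $G(s)\le\int_0^\infty e^{-(d!\tau)^{1/d}}\,d\tau<\infty$ because the integrand decays super-exponentially), the functional $I(v)$ could \emph{a priori} be $+\infty$ only if the interval of integration were infinite, but here we integrate over the fixed finite interval $(0,A)$ — and when $A=\infty$ ($p>1$) I would instead use that the constraint \eqref{eq due} together with monotonicity of $u$ forces decay of $u(t)$, combined with $G$ being bounded and with a Chebyshev-type bound $u(t)\le B^p/(p\int_0^t s^{p-1}\,ds)=B^p/t^p$, making $\int_0^\infty G(u(t))\,dt$ converge. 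So the supremum is a finite number, call it $M$.

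Next, take a maximizing sequence $u_n\in\cC$. The key compactness input is Helly's selection theorem: each $u_n$ is nonincreasing and nonnegative, and the constraint bounds them pointwise away from $0$ (the Chebyshev bound $u_n(t)\le B^p/t^p$ on $(0,A)$ is uniform in $n$), so by Helly's theorem a subsequence converges pointwise a.e. on $(0,A)$ to a nonincreasing function $u$. I would then check that $u\in\cC$: the pointwise limit of nonincreasing functions is nonincreasing, and the integral constraint \eqref{eq due} passes to the limit by Fatou's lemma applied to $t^{p-1}u_n(t)$ (which gives $p\int_0^A t^{p-1}u(t)\,dt\le\liminf p\int_0^A t^{p-1}u_n(t)\,dt\le B^p$). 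Finally, to see $I(u)\ge M$ I would pass to the limit in $I(u_n)=\int_0^A G(u_n(t))\,dt$; since $G$ is continuous and bounded and the interval $(0,A)$ has finite measure (or, in the case $A=\infty$, using the uniform integrable majorant $G(B^p/t^p)$ which is integrable on $(0,\infty)$), dominated convergence gives $I(u_n)\to I(u)$, whence $I(u)=M$ and $u$ is an extremal.

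For the last assertion — that every extremal saturates \eqref{eq due} — I would argue by contradiction: if $u$ is extremal but $p\int_0^A t^{p-1}u(t)\,dt<B^p$ strictly, then I can perturb $u$ upward while staying in $\cC$ and strictly increase $I$. Concretely, replace $u$ by $u_\delta:=u+\delta\,\chi_{(0,r)}$ for small $\delta>0$ and some fixed $r\le A$ (or by $(1+\delta)u$ if $A=\infty$, to keep integrability): this is still nonincreasing, still satisfies the integral constraint for $\delta$ small enough by continuity, and since $G$ is strictly increasing on $(0,\infty)$ (its derivative $G'(s)=e^{-(d!s)^{1/d}}>0$) and $u$ is finite a.e., we get $I(u_\delta)>I(u)$, contradicting maximality. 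The one point requiring a little care is the edge case where $u\equiv 0$ — but then $I(u)=0$ while any admissible nonzero $u$ gives $I>0$, so $u\equiv0$ is never extremal and the perturbation argument applies. The main obstacle I anticipate is the bookkeeping for the unbounded case $A=\infty$: one must make sure the Chebyshev/monotonicity bound really does provide a fixed integrable majorant for $G(u_n(\cdot))$ near $t=0$ as well as near $t=\infty$, since near $t=0$ the bound $B^p/t^p$ blows up and one instead uses that $G\le\|G\|_\infty$ together with finiteness after splitting the integral at a fixed point $t=1$; this splitting, plus Helly on each piece, is the technical heart.
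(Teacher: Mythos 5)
Your proposal is correct and follows essentially the same route as the paper: the Chebyshev-type pointwise bound $u(t)\le B^p/t^p$ (obtained from the constraint and monotonicity), dominated convergence against the majorant $G(B^p/t^p)$ to get finiteness, Helly selection plus Fatou for existence of a maximizer, and a small upward perturbation to show the constraint must be saturated. The only cosmetic difference is the perturbation you use in the last step ($u+\delta\chi_{(0,r)}$, or $(1+\delta)u$) versus the paper's $u_\eps(t)=u(t)+\eps e^{-t}$; both are valid, and the paper's choice just has the convenience of being positive everywhere and automatically in $L^1((0,A),t^{p-1}dt)$ in all cases (so the $u\equiv 0$ digression is unnecessary — your $u+\delta\chi_{(0,r)}$ perturbation already works fine even when $u\equiv 0$). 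One place to be slightly more explicit: for $A=\infty$, integrability of the majorant $G(B^p/t^p)$ near $t=\infty$ requires the linear bound $G(s)\le s$ (which holds since $G$ is concave with $G(0)=0$ and $G'(0)=1$), not merely boundedness of $G$; you flag the issue but the key inequality should be stated.
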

\begin{proof}
Due to \eqref{eq due}, for every $u\in\cC$ and every $t\in (0,A)$ we have
\[
t^p u(t)\leq p\int_0^{t} \tau^{p-1} u(\tau)\, d\tau\leq B^p,
\]
which yields the pointwise bound
\begin{equation}
\label{bound1}
u(t)\leq B^p/t^p\qquad\forall t\in (0,A),\quad
\forall u\in \cC.
\end{equation}
Since $G$ in \eqref{defG} is increasing,  from this bound we obtain
\begin{equation}\label{dom}
I(u)=\int_0^A G(u(t))\,dt\leq
\int_0^A G\bigl(B^p/t^p\bigr)\,dt<\infty \quad\forall u\in\cC
\end{equation}
(when $A=\infty$, and hence $p>1$, the finiteness of the last integral follows from the bound $G(s)\leq s$).
As a consequence, the supremum in \eqref{eq uno} is finite.

Now let $u_n\in\cC$ be a maximizing sequence for problem \eqref{eq uno}, i.e.
\[
\lim_{n\to\infty} I(u_n)=\sup_{v\in\cC}I(v).
\]
Since each $u_n$ satisfies a bound as in \eqref{bound1},
by Helly's selection
theorem we can extract a subsequence (still denoted by $u_n$) pointwise converging to
a decreasing function $u: (0,A)\to [0,+\infty)$, which still satisfies \eqref{eq due}
by Fatou's lemma, 
so that $u\in\cC$ as well. As already observed, $u_n(t)\leq B^p/t^p$, and therefore by
dominated convergence (arguing as in \eqref{dom}) we have
$
I(u)=\lim_{n\to\infty} I(u_n)$,
which proves that $u$ is a maximizer.

Now assume that a strict inequality occurs in the constraint \eqref{eq due}.
Then, letting $u_\varepsilon(t)=u(t)+\varepsilon e^{-t}$ and choosing $\varepsilon>0$ small enough, we
would have $u_\varepsilon\in\cC$: since the function $G(s)$ in \eqref{defG} is strictly increasing,
we would also have $I(u_\varepsilon)>I(u)$, which is impossible since $u$ is a maximizer.
\end{proof}
We now show that one can remove the monotonicity assumption in \eqref{defC},
while retaining the same extremal functions for the corresponding maximization problem. 

%Indeed, let $\cC'$ denote the set of all measurable functions $u:(0,A)\to[0,+\infty)$ satisfying the constraint \eqref{eq due}, and consider the maximization problem
%\begin{equation}\label{eq quattro}
%\sup_{v\in \cC'} I(v).
%\end{equation}

\begin{proposition}\label{pro pro2}
We have
\begin{equation}
\label{supuguali}
\sup_{v\in \cC} I(v)=\sup_{v\in \cC'} I(v),
\end{equation}
where
\[
\cC'=\left\{ u:(0,A)\to[0,+\infty)\,|\,\,\text{$u$ is measurable and satisfies \eqref{eq due}}\right\}.
\]
In particular, any function $u\in\cC$ achieving the supremum on the left-hand side,
also achieves the supremum on the right-hand side.
\end{proposition}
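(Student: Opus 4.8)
The plan is to show that any measurable $v\in\cC'$ can be replaced by its decreasing rearrangement $v^*$ without decreasing the objective $I$, while preserving membership in $\cC$ — this immediately gives ``$\leq$'' in \eqref{supuguali}, and the reverse inequality is trivial since $\cC\subset\cC'$. The key observation is that $I(v)=\int_0^A G(v(t))\,dt$ is a rearrangement-invariant functional: since $G$ is a fixed (Borel) function, $G(v)$ and $G(v^*)$ are equimeasurable on $(0,A)$, hence $I(v)=I(v^*)$ exactly. So the only real content is to verify that the constraint \eqref{eq due} is preserved (or relaxed) under rearrangement, i.e. that
\[
p\int_0^A t^{p-1}v^*(t)\,dt\leq p\int_0^A t^{p-1}v(t)\,dt.
\]
This is a Hardy–Littlewood-type inequality: the weight $w(t)=p\,t^{p-1}$ is \emph{increasing} on $(0,A)$, while $v^*$ is \emph{decreasing}; one expects that among all rearrangements of $v$, pairing it against the increasing weight $w$ in decreasing order \emph{minimizes} $\int w\,v$. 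First I would reduce to the bounded-measure setting: restricted to $(0,A)$ with $A<\infty$ this is the classical statement that $\int_0^A w(t)v(t)\,dt\geq\int_0^A w(t)v^*(t)\,dt$ when $w$ is nondecreasing (equivalently, apply Hardy–Littlewood to $w$ and $v$ after reflecting, or write $w(t)=\int_0^t w'(s)\,ds + w(0^+)$ and use Fubini together with $\int_{\{v>s\}}\!1\geq\int_0^{\mu_v(s)}\!1$ on initial segments). When $A=\infty$ (so $p>1$), one handles it by truncation: apply the inequality on $(0,R)$ and let $R\to\infty$, noting that both sides are finite because the pointwise bound $v^*(t)\leq B^p/t^p$ (which holds for any $v\in\cC'$ by the same computation as in \eqref{bound1}, since that computation only used \eqref{eq due}) makes $t^{p-1}v^*(t)$ integrable near $+\infty$.

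Having shown $v^*\in\cC$ with $I(v^*)=I(v)$, the supremum over $\cC'$ is attained along the rearranged sequence, so it equals the supremum over $\cC$; and any $u\in\cC$ realizing the left-hand supremum trivially realizes the right-hand one since the two suprema coincide. This proves \eqref{supuguali} and the final sentence of the statement.

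The main obstacle I anticipate is purely technical rather than conceptual: making the rearrangement inequality $\int w\,v^*\leq\int w\,v$ fully rigorous when $v$ is merely measurable and nonnegative (possibly with $v^*$ taking the value $+\infty$ on an initial segment, or $\mu_v(s)=\infty$ for small $s$) and when the interval is infinite. The clean way around this is to note that $\int w v$ can be taken finite without loss (otherwise there is nothing to prove), that finiteness of $\int_0^A t^{p-1}v\,dt$ forces $\mu_v(s)<\infty$ for every $s>0$ so $v^*$ is a genuine finite-valued decreasing function, and then to invoke the layer-cake identity $\int_0^A w(t)v(t)\,dt=\int_0^\infty\Big(\int_{\{v>s\}\cap(0,A)} w(t)\,dt\Big)ds$ together with the elementary fact that for a nondecreasing $w\geq0$ and any measurable $E\subset(0,A)$ of measure $m$ one has $\int_E w\geq\int_0^m w$. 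Summing over $s$ then yields the claim, with the $A=\infty$ case obtained by monotone convergence as $A$-truncations increase.
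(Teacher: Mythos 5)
Your proposal is correct and follows essentially the same route as the paper: replace $v\in\cC'$ by its decreasing rearrangement $v^*$, use equimeasurability of $v$ and $v^*$ (hence of $G(v)$ and $G(v^*)$) to get $I(v)=I(v^*)$, and verify $v^*\in\cC$ via the layer-cake formula together with the elementary inequality $\int_E w \ge \int_0^{|E|} w$ for nondecreasing $w$ and measurable $E\subset(0,A)$ — which is exactly the paper's observation that the measure $\nu$ with density $t^{p-1}$ satisfies $\nu(E)\ge\nu((0,|E|))$. The technical points you flag (finiteness of $\mu_v(s)$ for $s>0$, the $A=\infty$ case) are handled the same way in the paper.
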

\begin{proof} Consider an arbitrary $u\in\cC'$, and let
$u^\ast$ denote  the decreasing rearrangement of $u$ (i.e. 
the unique descreasing, right-continuous function on $(0,A)$ which is
equimeasurable with $u$, see \cite[Section 10.12]{HLP}).
 Since for every $s>0$
the superlevel set $\{u>s\}\subseteq(0,A)$  has finite measure (this is trivial
when $A<\infty$, and is guaranteed by \eqref{eq due} when $A=\infty$),  we 
see that $u^\ast:(0,A)\to [0,+\infty)$ takes only finite values. 
We also claim that
\begin{equation}
\label{arrang}
p\int_0^{A} t^{p-1} u^\ast (t)\, dt\leq p\int_0^{A} t^{p-1} u (t)\, dt.
\end{equation}
Indeed, let $\nu$ denote the Radon measure on $(0,A)$ with density $ t^{p-1}$,
and observe that, since $t^{p-1}$ is an increasing function of $t$,
\[
\nu(E)=\int_E  t^{p-1}\,dt  \geq
\int_0^{|E|} t^{p-1}\,dt=\nu\bigl((0,|E|)\bigr)
\]
for every measurable $E\subseteq (0,A)$, where $|E|$ denotes the Lebesgue measure of $E$.
In particular, when $E=\{u>s\}$  is a superlevel set of $u$, we obtain
\[
\nu(\{u>s\})\geq \nu\bigl((0,|\{u>s\}|)\bigr)
=\nu\bigl((0,|\{u^*>s\}|)\bigr)
=\nu(\{u^\ast>s\})\quad\forall s\geq 0
\]
(the first equality is due to the equimeasurability of $u$ and $u^*$, the second to the fact
that $u^*$ is decreasing and right-continuous).
Then \eqref{arrang} follows, since
\begin{align*}
 &\int_0^A t^{p-1} u(t)\,dt=
\int_0^A u(t)\,d\nu(t)=
\int_0^\infty \nu(\{u>s\})\,ds\\
&\geq\int_0^\infty \nu(\{u^*>s\})\,ds
=\int_0^A u^*(t)\,d\nu(t)=
 \int_0^A t^{p-1} u^*(t)\,dt.
\end{align*}
Since $u^\ast$ is decreasing and $u\in\cC'$, \eqref{arrang} shows that $u^\ast\in\cC$.
On the other hand, $u$ and $u^\ast$ are equi-measurable, so that
$I(u)=I(u^\ast)\leq \sup_{v\in\cC} I(v)$.
Then, from the arbitrariness of $u\in\cC'$, we see that the inequality $\geq$ occurs in
\eqref{supuguali}, while the opposite inequality is trivial because $\cC\subset \cC'$.
\end{proof}
\begin{remark} This proof entails that if $u\in\cC'$ is not
equal a.e. to a decreasing function (i.e. if $u$
and $u^*$ do not coincide a.e.), then the inequality in \eqref{arrang}
is strict, and hence (by the last part of Proposition \ref{pro pro1}) $u^*$ cannot achieve
the supremum in \eqref{eq uno}. Since $I(u)=I(u^*)$, 
from \eqref{supuguali} we infer that the second supremum in \eqref{supuguali} can be
achieved only by functions in $\cC$.
In the following, however, we will not need this fact.
\end{remark}
Now we are in a position to completely solve problem \eqref{eq uno}. Let
\begin{equation}\label{def log-}
\logmeno(x):=\max\{-\log x,0\},\quad x>0. 
\end{equation}
\begin{theorem}\label{thmcharmax}
There exists a unique function $u\in\cC$ attaining the  supremum in \eqref{eq uno}, and
this
$u$ achieves equality in \eqref{eq due}. More precisely:
\begin{itemize}
\item[(i)] If $p=1$ (hence $A<\infty$), then $u(t)=B/A$ is constant on $(0,A)$, and
\begin{equation}
\label{Icasoi}
I(u)=%\sup_{v\in\cC} I(v)=
A \, G(B/A)
\end{equation}
where $G(s)$ is defined as in \eqref{defG}.
%
%
%while if $p>1$
%it has the form
% \begin{equation}\label{eq mult2*}
%u(t)=\frac 1 {d!}\max\Big\{(-\log(\lambda t^{p-1}))^d,0\Big\}\qquad\forall t\in (0,A),
% \end{equation}
% for some constant $\lambda>0$ uniquely determined by the fact that $u$ satisfies the constraint \eqref{eq due} with equality.
% , i.e.
% \begin{equation}\label{eq mult3}
%     p\int_0^A t^{p-1} u_\lambda(t)\, dt=B^p.
% \end{equation}
%
%\begin{equation}
%\label{ulog}
%%u(t)=\max\{-\log(t/\lambda)^{p-1},0\}\qquad t\in (0,A),
%\end{equation}
%where $\lambda>0$ is a constant that depends on $p$, $A$ and $B$. More precisely:
\item[(ii)] If $p>1$ and $(B/A)^p \leq \kp^d$ (in particular when $A=\infty$), then 
\begin{align}
\label{ucasob}
u(t)=
\frac 1 {d!}\bigl(\logmeno((t/\lambda)^{p-1})\bigr)^d,\quad\forall t\in (0,A)
\end{align}
where $\lambda=B \kp^{-\frac d p}\leq A$, and
\begin{align}
I(u)=%\sup_{v\in\cC} I(v)= 
\kp^{d \kp}\,B.
\label{Icasob}
\end{align}
\item[(iii)] If $p>1$ and $(B/A)^p > \kp^d$, then $u$ is given by
\begin{equation}
\label{ucasoa}
u(t)=\frac 1 {d!}\bigl(-\log ((t/\lambda)^{p-1})\bigr)^d\quad\forall t\in (0,A),
\end{equation}
where $\lambda> A$ is uniquely determined by the condition that $u$ achieves equality in \eqref{eq due}.
%\begin{equation}
%\label{ucasoad=1}
%u(t)=(B/ A)^p-\kp -\log(t/A)^{p-1}\quad
%\forall t\in (0,A),
%%\quad \sigma=e^{(B/A)^{p}/(p-1)-1/p}.
%\end{equation}
Moreover, when $d=1$, $\lambda=A e^{(B/A)^p/(p-1)-1/p}$ and
\begin{equation}
\label{Icasoad=1}
I(u)=%\sup_{v\in\cC} I(v)=
A\left(1-\frac{e^{\kp-(B/A)^p}}p\right)\qquad\text{(when $d=1$).}
\end{equation}
%\end{equation}

\end{itemize}
\end{theorem}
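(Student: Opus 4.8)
The plan is to solve the variational problem \eqref{eq uno} by a Lagrange-multiplier (``dual tangent line'') argument built on the fact that $G$ is of class $C^1$ on $[0,+\infty)$ with $G'(s)=e^{-(d!s)^{1/d}}$ \emph{strictly} decreasing (so $G$ is strictly concave), $G'(0)=1$ and $G(+\infty)=1$. For $p>1$ the candidate maximizer is, for a suitable $\lambda>0$, the function
\[
u_\lambda(t):=\tfrac{1}{d!}\bigl(\logmeno((t/\lambda)^{p-1})\bigr)^d,\qquad t\in(0,A),
\]
and $\lambda$ is chosen as the (unique, as we check) value for which $u_\lambda$ saturates the constraint \eqref{eq due}. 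Whether $\lambda\le A$ (case (ii), the truncation in $\logmeno$ being active) or $\lambda>A$ (case (iii), $u_\lambda>0$ throughout $(0,A)$) is governed by the sign of $(B/A)^p-\kp^d$. The case $p=1$ is degenerate (the weight $t^{p-1}\equiv1$) and is handled directly by Jensen's inequality.

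The core is the pointwise bound: for every $\lambda>0$, every $t\in(0,A)$ and every $v\ge0$,
\begin{equation}\label{eq:tangentplan}
G(v)\ \le\ G(u_\lambda(t))+(t/\lambda)^{p-1}\bigl(v-u_\lambda(t)\bigr),
\end{equation}
with equality if and only if $v=u_\lambda(t)$. When $t<\lambda$ we have $u_\lambda(t)>0$ and $G'(u_\lambda(t))=(t/\lambda)^{p-1}$, so \eqref{eq:tangentplan} is exactly the tangent-line bound for the strictly concave $G$; when $t\ge\lambda$ we have $u_\lambda(t)=0$ and $(t/\lambda)^{p-1}\ge1$, and \eqref{eq:tangentplan} reads $G(v)\le(t/\lambda)^{p-1}v$, which holds since $G(v)=\int_0^v e^{-(d!\tau)^{1/d}}d\tau<v\le(t/\lambda)^{p-1}v$ for $v>0$ (strictly, which settles the equality case there too). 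Integrating \eqref{eq:tangentplan} with $v=u(t)$ over $(0,A)$ — all the integrals $\int_0^A t^{p-1}u(t)\,dt$ being finite by \eqref{eq due} — gives $I(u)\le I(u_\lambda)+\lambda^{1-p}\bigl(\int_0^A t^{p-1}u(t)\,dt-\int_0^A t^{p-1}u_\lambda(t)\,dt\bigr)$; if $\lambda$ is chosen so that $u_\lambda$ attains equality in \eqref{eq due}, the parenthesis is $\le0$ whenever $u$ satisfies \eqref{eq due}, whence $I(u)\le I(u_\lambda)$, with equality forcing $u=u_\lambda$ a.e. by the equality case of \eqref{eq:tangentplan}. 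This argument works verbatim for every $u\in\cC'$; combined with the existence statement of Proposition \ref{pro pro1}, it shows $u_\lambda$ is the unique maximizer (and reproves Proposition \ref{pro pro2}).

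It then remains to pin down $\lambda$ and compute $I(u_\lambda)$. For $p>1$, exploiting that $u_\lambda=0$ on $(\lambda,A)$ and the substitution $t=\lambda e^{-s/(p-1)}$ (so $(t/\lambda)^{p-1}=e^{-s}$), a Gamma-integral computation gives $p\int_0^{\min(\lambda,A)}t^{p-1}u_\lambda(t)\,dt=\lambda^p\kp^d$ when $\lambda\le A$; hence one takes $\lambda=B\kp^{-d/p}$, which indeed satisfies $\lambda\le A\iff(B/A)^p\le\kp^d$ — this is case (ii) — and an integration by parts (using $\tfrac{d}{ds}G(s^d/d!)=\tfrac{s^{d-1}}{(d-1)!}e^{-s}$ and $G(+\infty)=1$) yields $I(u_\lambda)=\lambda\kp^d=B\kp^{d\kp}$, i.e. \eqref{Icasob}. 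When $(B/A)^p>\kp^d$ one needs $\lambda>A$: here $u_\lambda>0$ on $(0,A)$, and $\lambda\mapsto p\int_0^A t^{p-1}u_\lambda(t)\,dt$ is continuous and strictly increasing on $[A,+\infty)$, equal to $A^p\kp^d<B^p$ at $\lambda=A$ and tending to $+\infty$, so there is a unique $\lambda>A$ saturating \eqref{eq due} — this is case (iii), giving \eqref{ucasoa}. For $d=1$, where $G(s)=1-e^{-s}$ and $u_\lambda(t)=(p-1)\log(\lambda/t)$, the constraint is elementary and yields $\lambda=Ae^{(B/A)^p/(p-1)-1/p}$, while $I(u_\lambda)=\int_0^A(1-(t/\lambda)^{p-1})\,dt=A\bigl(1-\tfrac1p(A/\lambda)^{p-1}\bigr)=A\bigl(1-\tfrac1p e^{\kp-(B/A)^p}\bigr)$, i.e. \eqref{Icasoad=1}. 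Finally, for $p=1$ the constraint is $\int_0^A u\,dt\le B$ and, $G$ being strictly concave and increasing, $I(u)=\int_0^A G(u(t))\,dt\le A\,G\bigl(\tfrac1A\int_0^A u(t)\,dt\bigr)\le A\,G(B/A)$, with equality iff $u\equiv B/A$; this is case (i) and \eqref{Icasoi}.

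The only steps requiring real care are the verification of \eqref{eq:tangentplan} in the regime $t\ge\lambda$ (and at $t=\lambda$, where it degenerates to $G(v)\le v$), and checking that the candidate $u_\lambda$ genuinely lies in $\cC$ and saturates \eqref{eq due} for the claimed $\lambda$ (including, in case (iii) with general $d$, the monotonicity argument for existence and uniqueness of $\lambda>A$). Beyond that, everything is standard tangent-line/Jensen machinery together with routine evaluation of Gamma-type integrals; I do not expect any deeper obstacle, since the ``nonstandard'' features of the problem — the weight $t^{p-1}$ in the constraint and the nonlinear functional $v\mapsto G(v)$ — are entirely absorbed by the concavity of $G$ and the monotonicity of $t\mapsto t^{p-1}$.
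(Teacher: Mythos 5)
Your proof is correct, and it takes a genuinely different route from the paper's. The paper establishes existence of a maximizer in $\cC$ via Helly's selection theorem (Proposition~\ref{pro pro1}), shows one may pass to the larger class $\cC'$ (Proposition~\ref{pro pro2}), and then derives the Euler--Lagrange condition $G'(u(t))=c\,t^{p-1}$ by localized admissible perturbations satisfying \eqref{ortog*}; the boundary value $\lambda=M$ when $M<A$ requires an additional one-sided perturbation argument, and uniqueness is obtained a posteriori from the strict monotonicity of $\lambda\mapsto h(\lambda)$. You instead posit the candidate $u_\lambda$ and verify optimality directly via the tangent-line inequality $G(v)\le G(u_\lambda(t))+(t/\lambda)^{p-1}(v-u_\lambda(t))$, exploiting that $(t/\lambda)^{p-1}=G'(u_\lambda(t))$ for $t<\lambda$ and that $(t/\lambda)^{p-1}\ge 1\ge G'(0)$ with $G(v)\le v$ for $t\ge\lambda$; integrating against any $u\in\cC'$ and using saturation of \eqref{eq due} for $u_\lambda$ immediately gives $I(u)\le I(u_\lambda)$ with equality iff $u=u_\lambda$ a.e. This is cleaner in several respects: it yields existence, uniqueness and the equivalence of the suprema over $\cC$ and $\cC'$ in one stroke (so Propositions~\ref{pro pro1} and~\ref{pro pro2} become byproducts rather than prerequisites), it avoids the density/truncation step needed to pass from bounded perturbations $\eta\in L^\infty$ to $\eta\in L^2$, and it avoids the separate one-sided argument at $t=M$, which is instead absorbed into the case $t\ge\lambda$ of the pointwise bound. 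What the paper's approach buys is a constructive derivation of the form of $u$ from first principles (one does not need to guess $u_\lambda$ in advance), which may be preferable if one wished to adapt the argument to a functional $G$ for which no closed form for the critical point is available. The computational parts (identifying $\lambda$, the threshold $(B/A)^p$ vs.\ $\kp^d$, and evaluating $I(u_\lambda)$ via the $\Gamma$-integral, including the explicit $d=1$ formulas) coincide with the paper's.
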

\begin{proof}[Proof of Theorem \ref{thmcharmax}]
We already know from Proposition \ref{pro pro1} that the supremum in \eqref{eq uno} is
achieved by some $u\in\cC$ and that equality holds in \eqref{eq due} for any such $u$, i.e.
\begin{equation}
\label{saturated*}
p \int_0^A t^{p-1} u(t)\,dt=B^p.
\end{equation}
 Moreover, by Proposition \ref{pro pro2}, any such $u$ also achieves
 the second supremum in \eqref{supuguali}: we now exploit this information
to determine $u$.

Let us first assume that $p=1$ (case (i)).
 By Jensen
inequality and \eqref{saturated*} 
\[
\frac {I(u)}A =\frac 1 A \int_0^A G\left(u(t)\right)\,dt
\leq G\left(
\frac 1 A \int_0^A u(t)\,dt\right)
=G(B/A)
\]
with equality if and only if $u$ is constant (i.e. $u= B/A$), since
$G$ is strictly concave. Then \eqref{Icasoi} follows, and case (i) is proved.

\smallskip

From now on, we assume that $p>1$.
Observe that $u$ is not identically zero, due to \eqref{saturated*}.
Thus, letting
\begin{equation}
\label{defM*}
M=\sup \,\{ t\in (0,A)\,|\,\, u(t)>0\},
\end{equation}
since $u$ is decreasing  and $u\geq 0$, we have $M\in (0,A]$
and $u(t)>0$ for $t\in (0,M)$, while $u(t)=0$ for $t\in (M,A)$ if $M<A$. 
%In any case,
%\eqref{saturated*} reduces to
%\begin{equation}
%\label{saturated3}
%p \int_0^M t^{p-1} u(t)\,dt=B^p.
%\end{equation}

Now fix two numbers $a,b$ such that $0<a<b<M$, and let $\eta\in L^\infty(0,A)$ be an arbitrary
function supported in $[a,b]$ and such that
\begin{equation}
\label{ortog*}
\int_a^b t^{p-1} \eta(t)\, dt=0.
\end{equation}
Since on $[a,b]$ we have $u(t)\geq u(b)>0$, if $|\varepsilon|$ is small enough
(depending on $\Vert\eta\Vert_{L^\infty}$) we have   $u+\varepsilon \eta\in \cC'$,
and the function $I(u+\varepsilon\eta)$ has a maximum
at $\varepsilon=0$ since $u$ is a maximizer of $I$ on $\cC'$. Therefore,
differentiating under the integral, we obtain (without expanding $G'$ for the moment)
\begin{equation}
\label{euler}
0=\frac{d}{d\varepsilon}I(u+\varepsilon\eta)|_{\varepsilon=0}=\int_a^{b} G'\bigl(
u(t)\bigr) \eta(t)\, dt.
\end{equation}
Focusing on the interval $[a,b]$, this condition can be interpreted as follows:
the restriction to $[a,b]$ of the function $G'(u(t))$ is orthogonal, in $L^2(a,b)$, to
every function $\eta\in L^\infty(a,b)$ satisfying \eqref{ortog*}.
Now, given $\eta\in L^2(a,b)$ satisfying \eqref{ortog*}, by a standard truncation
argument (and a slight perturbation of the resulting functions) one can easily
construct functions $\eta_k\in L^\infty(a,b)$, each satisfying
$\int_a^b t^{p-1} \eta_k(t)\,dt=0$ (hence also $\int_a^b G'(u(t))\eta_k(t)\,dt=0$),
such that $\eta_k\to\eta$ in $L^2(a,b)$. Letting $k\to\infty$,
we see that $G'(u(t))$ is orthogonal, in $L^2(a,b)$,
to any $\eta\in L^2(a,b)$ (not necessarily $L^\infty$) satisfying \eqref{ortog*}, and therefore
\begin{equation}\label{eq cinque*}
G'(u(t))=c\, t^{p-1}\quad\text{for a.e. $t\in [a,b]$}
\end{equation}
for some constant $c>0$. In fact, by the arbitrariness
of the interval  $[a,b]\subset (0,M)$, letting $a\to 0^+$ and $b\to M^-$
one can see
that \eqref{eq cinque*} holds
for a.e. $t\in (0,M)$,
and hence for \emph{every} $t\in(0,M)$ as $u(t)$ is decreasing. 
Defining a new constant $\lambda$ by $\lambda^{1-p}=c$, we can write this as
\begin{equation}\label{eqsei*}
G'(u(t))=(t/\lambda)^{p-1}\quad\forall t\in (0,M)
\end{equation}
and hence, since $G'(s)= \exp(-(s \,d!)^{1/d})$,
we find
\begin{equation}
\label{ulog2*}
u(t)=
\begin{cases}
\displaystyle \frac 1 {d!}\bigl(-\log (t/\lambda)^{p-1}\bigr)^d & \text{if  $t\in (0,M)$}\\
0\phantom{\displaystyle \frac I 2} & \text{if $t\in (M,A)$}
\end{cases}
\end{equation}
the second case being meaningful only if $M<A$.
Observe that, since $u(t)>0$ on $(0,M)$, we must have
\begin{equation}
\label{lgM*}
\lambda\geq M,
\end{equation} 
in particular $M<\infty$. Now we prove that, if $M<A$, then actually $\lambda=M$,
so that by \eqref{ulog2*} $u(t)$ is continuous also at $t=M$.
Arguing by contradiction, i.e. assuming that $M<A$ but $\lambda>M$,
%\begin{equation}
%\label{leftlimpos}
%\lim_{t\to M^-} u(t)=-\log(M/\lambda)^{p-1}>0,
%\end{equation}
we choose $\delta>0$
small enough (so that $M-\delta>0$ and $M+\delta<A$) and define the function
\begin{equation}
\label{defeta*}
\eta(t)=\begin{cases}
-t^{1-p} & \text{if $t\in (M-\delta,M)$}\\
t^{1-p} & \text{if $t\in (M,M+\delta)$}\\
0 &\text{otherwise}
\end{cases}
\end{equation}
in such a way that \eqref{ortog*} holds true.
%\begin{equation}
%\label{intzero}
%\int_0^A t^{p-1}\eta(t)\,dt=0.
%\end{equation}
Defining $u_\eps(t)=u(t)+\eps\eta(t)$, \eqref{defeta*} and \eqref{ulog2*} 
(combined with our assumption that $\lambda>M$)
imply that $u_\eps\geq 0$ on $(0,A)$, provided $\eps>0$ is small enough. This, combined with
\eqref{ortog*}, yields for small $\eps>0$ an admissible competitor $u_\eps\in \cC'$, so that
$I(u_\eps)\leq I(u)$ since $u$ is a maximizer of $I$. Hence, differentiating
under the integral as done for \eqref{euler}, we obtain
\[
0\geq \lim_{\varepsilon\to0^+}\frac{I(u_\eps)-I(u)}{\varepsilon}=\int_{M-\delta}^{M+\delta} G'(u(t))\eta(t)\, dt.
%=-\int_{M-\delta}^M G'(u(t)) t^{1-p}\,dt+
%\int_M^{M+\delta} G'(0) t^{p-1},
\]
According to \eqref{defeta*}, \eqref{eqsei*}, and the fact that $G'(u(t))=G'(0)=1$ when $t>M$,
the last inequality reduces to
\[
0\geq
-\int_{M-\delta}^M \lambda^{1-p}\,dt + \int_M^{M+\delta} t^{1-p}\,dt.
\]
Dividing by $\delta$ and letting $\delta\to 0^+$ we find 
 $M\geq \lambda$,  which is
the desired contradiction. Hence $\lambda=M$, if $M<A$.

This shows that, regardless of whether $M<A$ or $M=A$,  \eqref{ulog2*} can be written, with the notation \eqref{def log-}, as
\begin{equation}
\label{expru*}
u(t)=\frac 1 {d!} \bigl(\logmeno (t/\lambda)^{p-1}\bigr)^d \quad\forall t\in (0,A).
\end{equation}
Now let $h(\lambda)=p \int_0^A t^{p-1} u_\lambda(t)\,dt$, $\lambda>0$, where $u_\lambda$ is the function on the
right-hand side of \eqref{expru*}. Then $h(\lambda)$ is strictly increasing, so that there is at most one value of
$\lambda>0$ such that  \eqref{saturated*} (i.e. $h(\lambda)=B^p$) is satisfied, and this
proves the uniqueness of $u$, as a maximizer of $I$, via \eqref{expru*}.
To quantify $\lambda$ and make \eqref{expru*} more explicit, note that if $A<\infty$
(by the change of variable $t/A=\tau$)
\begin{equation}
\label{hA}
h(A)%\frac p{d!}\int_0^A t^{p-1} (-\log (t/A)^{p-1}\bigr)^d\,dt
=\frac {pA^p}{d!}\int_0^1 \tau ^{p-1} (-\log \tau^{p-1}\bigr)^d\,d\tau
=A^p \frac {(p-1)^d}{p^d}=\kp^d A^p.
\end{equation}
In case (ii), when $B^p \leq \kp^d A^p$, the strict monotonicity of $h(\lambda)$ and
the condition $h(\lambda)=B^p$ imply that $\lambda \leq A$.
%, and \eqref{expru*}
%amounts to
%\begin{equation}
%\label{ucaso<}
%u(t)=\begin{cases}
%\frac 1 {d!}\bigl(-\log (t/\lambda)^{p-1}\bigr)^d
%& \text{if $t\in (0,\lambda)$}\\
%0 & \text{if $t\in [\lambda,A).$}
%\end{cases}
%\end{equation}
%Extending $u(t)$.... 
Then, by \eqref{expru*}, \eqref{saturated*} becomes
\begin{align*}
B^p &= \frac p {d!} \int_0^\lambda t^{p-1} \bigl(-\log (t/\lambda)^{p-1}\bigr)^d\,dt\\
&=
\frac {p \lambda^p} {d!} \int_0^1 \tau^{p-1} \bigl(-\log \tau ^{p-1}\bigr)^d\,d\tau
=\lambda^p \frac{(p-1)^d}{p^d}=\lambda^p \kp^d,
\end{align*}
so that $\lambda=B\kp^{-d/p}$ and \eqref{ucasob} follows from \eqref{expru*}.
Since $G(0)=0$, using \eqref{expru*} we also have
\[
I(u)=%\int_0^A G(u(t))\,dt=
\int_0^\lambda G\bigl((-\log(t/\lambda)^{p-1})^d/d!\bigr)\,dt=
\lambda \int_0^1 G\bigl((-\log\tau^{p-1})^d/d!\bigr)\,d\tau
\]
and since by \eqref{defG}
\begin{equation}\label{Gu}
G\bigl((-\log\tau^{p-1})^d/d!\bigr)=1-\tau^{p-1} \sum_{j=0}^{d-1} \frac{(-\log \tau^{p-1})^j}{j!},
\end{equation}
one can check that the last integral evaluates to $\kp^d$, so that 
\[
I(u)=\lambda \kp^d=B \kp^{-d/p}\kp^d = B \kp^{d\kp},
\]
and \eqref{Icasob} follows.
 
Similarly, in case (iii), when 
$B^p > \kp^d A^p$, 
the condition $h(\lambda)=B^p$ implies that $\lambda > A$, and \eqref{expru*}
simplifies to \eqref{ucasoa}.
Then \eqref{saturated*} becomes
\[
B^p = \frac p {d!} \int_0^A t^{p-1} \bigl(-\log (t/\lambda)^{p-1}\bigr)^d\,dt\quad (\lambda> A),
\]
but now the values of $\lambda$ and $I(u)$ cannot be computed explicitly, except when $d=1$.
In this case, computing the last integral, one finds $\lambda=A e^{(B/A)^{p}/(p-1)-1/p}$
as claimed after \eqref{ucasoa}, and  since  when $d=1$  $G(s)=1-e^{-s}$ and $u(t)=-\log(t/\lambda)^{p-1}$
by \eqref{ucasoa}, we can compute 
\begin{equation*}
I(u)=
\int_0^A G(u(t))\,dt
=\int_0^A \left(1-(t/\lambda)^{p-1}\right)\,dt=A\left(1-\frac{(A/\lambda)^{p-1}}p\right),
\end{equation*}
and \eqref{Icasoad=1} follows if one replaces $\lambda$ with the value mentioned above.
\end{proof}

\begin{remark}\label{rem p=1} In Theorem \ref{thmcharmax}, when $p=1$ we
assumed $A<
\infty$.  Indeed, when $p=1$ and $A=
\infty$,
there is no extremal function: for every $u\in\cC\setminus\{0\}$ we have, by \eqref{eq due} (with $p=1,A=\infty$),
\[
I(u)=
\int_0^{+\infty} G(u(t))\, dt<\int_0^{+\infty} u(t)\, dt\leq B,
\]
whereas if we consider the sequence of functions $u_n$ on $(0,+\infty)$ given by $u_n(t)=B/n$ for $t\in (0,n)$, and $u_n(t)=0$ elsewhere, we have
$
I(u_n)=G(B/n)/n\to B$ as $n\to\infty$.
\end{remark}

\section{Proof of the main result in arbitrary dimension}\label{secdim}

This section is devoted to the proof of Theorems \ref{thm thm2 mult} and \ref{thm thm2}.
\begin{proof}[Proof of Theorems \ref{thm thm2 mult} and \ref{thm thm2}.] 
First observe that (i), (ii), and the last sentence of Theorem \ref{thm thm2} follow,
respectively,  from (i), (ii)
and the last sentence
of Theorem \ref{thm thm2 mult}, as particular cases when $d=1$. The same is true of (iii),
with the difference that \eqref{eq stima2} and \eqref{eq estr2} are more explicit than
\eqref{eq stima2 mult} and \eqref{eq estr2 mult 2}, since when $d=1$ the value of $\lambda$
can be found explicitly. Thus, we shall prove only Theorem \ref{thm thm2 mult}, discussing
how \eqref{eq stima2} and the value of $\lambda$ in \eqref{eq estr2} are obtained when $d=1$.

Let $F \in L^p(\rdd)$ 
satisfy \eqref{constraints},
and let $\mu$ be its distribution function as in \eqref{defmu}. Observing
that, due to \eqref{boundmu} and $G(0)=0$,
the integral in \eqref{stimanorma} can be restricted to $(0,A)$, recalling
\eqref{eq uno} and \eqref{defC} we have from \eqref{stimanorma}
\begin{equation}
\label{stimanorma5}
\Vert L_F\Vert_{L^2\to L^2}\leq \int_0^A G\bigl(\mu(t)\bigr)\,dt=I(\mu)\leq \sup_{v\in\cC} I(v)
\end{equation}
(note that $\mu\in\cC$).
On the other hand, this supremum is achieved by a unique function $u\in \cC$
completely characterized in Theorem \ref{thmcharmax}, so that
\begin{equation}
\label{stimanorma6}
\Vert L_F\Vert_{L^2\to L^2}\leq I(u) = \int_0^A G(u(t))\,dt.
\end{equation}
Then \eqref{eq stima2 ter 2} and \eqref{eq stima1 mult 2} follow, respectively, from \eqref{Icasoi}
and \eqref{Icasob}, while \eqref{eq stima2 mult} coincides with \eqref{stimanorma6} because, in
\eqref{eq stima2 mult}, $u_\lambda$ is the function $u$ in \eqref{ucasoa}. In particular, when $d=1$
and the value of $I(u)$ is the one in \eqref{Icasoad=1}, \eqref{stimanorma6} yields \eqref{eq stima2}.

Now observe that equality in \eqref{eq stima2 ter 2}, \eqref{eq stima1 mult 2} or 
\eqref{eq stima2 mult} (i.e. equality in \eqref{stimanorma6}) occurs if and only if
the two inequalities in \eqref{stimanorma5} reduce to equalitites, which corresponds
to the simultaneous validity of these two conditions:

\medskip

\noindent {(a)} Equality occurs in \eqref{stimanorma}. By Theorem \ref{thm4}, this means that
$F(z)=e^{i\theta} \rho(|z-z_0|)$ for some $\theta\in\bR$ and $z_0\in\rdd$, where $\rho:[0,+\infty)\to[0,+\infty)$ is decreasing.
Therefore, for every $t\in (0,A)$, the  set $\{|F|>t\}$ is a ball (centered at $z_0$) 
of measure $\mu(t)$ by \eqref{defmu}, whereas $\{|F|>t\}=\emptyset$ if $t\geq A$, by \eqref{constraints}.
%Thus, knowledge of $\mu$ allows one to determine $|F(z)|$ for a.e.

\smallskip

\noindent {(b)} $\mu$ (restricted to $(0,A)$ and regarded as an element of $\cC$) achieves the supremum in \eqref{eq uno}. By Theorem \ref{thmcharmax},
this means that $\mu$ is either the constant function equal to $B/A$, or the function in \eqref{ucasob}, or
the function in \eqref{ucasoa}, depending on whether we are in case (i), (ii) or (iii) (note that
these three cases match those of Theorem \ref{thm thm2 mult}), and moreover
\begin{equation}
\label{musat}
p\int_0^A t^{p-1}\mu(t)\,dt=B^p. 
\end{equation}

\medskip

Combining (a) and (b), we see that in case (i), where $\mu=B/A$ on $(0,A)$, for every
$t\in (0,A)$ the set $\{|F|>t\}$ is a ball 
(centered at $z_0$) of measure $B/A$, and hence  $F$ has the form
in \eqref{Fcasei}.

In case (ii), where $\mu$ is as in \eqref{ucasob} with $\lambda=B \kp^{-\frac d p}$, we can reconstruct the function
$\rho$ as follows.  For every $t\in (0,\lambda)$, the radius $r$ of a $2d$-ball
of measure $\mu(t)$ (such as the set $\{|F|>t\}$) satisfies the equation
\begin{equation}
\label{inversione}
\frac {\pi^d r^{2d}}{d!}  =\mu(t)=\frac{(-\log(t/\lambda)^{p-1})^d}{d!}
\end{equation}
(note that $\pi^d/d!$ is the volume of the unit ball in $\rdd$), and clearly $\rho(r)=t$.
Solving for $t$ we obtain
\begin{equation}
\label{solvet1}
t=\rho(r)=\lambda e^{-\frac \pi{p-1} r^2}\quad \forall t\in (0,\lambda),
\end{equation}
and this determines $\varphi(r)$ for every $r>0$. Since $F(z)=e^{i\theta}\varphi(|z-z_0|)$,
\eqref{eq estr1 mult} is proved.

Finally, in case (iii), where the restriction of $\mu(t)$ to $(0,A)$ is as in \eqref{ucasoa}, 
since now $\lambda > A$ we see that $\mu$ has a jump at $t=A$ (recall that  $\mu(t)=0$ when $t\geq A$). However,
if $t\in (0,A)$, to reconstruct $\varphi$ we can proceed as in \eqref{inversione}, 
now obtaining 
\begin{equation}
\label{solvet2}
t=\rho(r)=\lambda e^{-\frac \pi{p-1} r^2}\quad \forall t\in (0,A).
\end{equation}
The difference with respect to \eqref{solvet1} is that, since now $A<\lambda$, \eqref{solvet2} determines
$\rho(r)$ only for $r>r_0$, where $r_0$ is the value of $r$ that, plugged into \eqref{inversione},
yields $t=A$. However, since $\rho$ is decreasing and moreover
$\rho\leq A$ by \eqref{constraints}, we obtain that $\rho(r)=A$ when $r\in (0,r_0]$. In other words,
\[
\rho(r)=\min\left\{\lambda e^{-\frac \pi{p-1} r^2},A\right\},\quad r>0,
\]
and hence \eqref{eq estr2 mult 2} is proved. When $d=1$, the value of $\lambda$ is
the one mentioned after \eqref{ucasoa}, so that also \eqref{eq estr2} is established.

Notice that \eqref{saturF} follows from \eqref{defmu} and the formula
\[
\Vert F\Vert_{L^p}^p =p\int_0^\infty t^{p-1} \mu(t)\,dt=p\int_0^A t^{p-1} \mu(t)\,dt,
\]
combined with \eqref{musat}. Finally, the characterization of those $f,g$ satisfying \eqref{optfg} 
%(i.e. \eqref{fandg})
follows from Theorem \ref{thm4},  since any $F$ achieving equalitites  in \eqref{stimanorma5} achieves,
in particular, also equality in \eqref{stimanorma}.
\end{proof}

\section{Wavelet localization operators}\label{sec wavelet}
In this section we normalize the Fourier transform as
\[
\widehat{f}(\omega)= \frac{1}{\sqrt{2\pi}}\int_{\bR} e^{-i\omega t} f(t)\, dt.
\]
 For $\beta>0$ consider the so-called Cauchy wavelet $\psi_\beta\in L^2(\bR)$ \cite{daubechies_book,daubechies_paul,grossmann} defined by
\[
\widehat{\psi_\beta}(\omega)=\frac{1}{c_\beta}\chi_{[0,+\infty)}(\omega) \omega^\beta e^{-\omega},
\]
where $c_\beta>0$ and $c_\beta^2= 2\pi 2^{-2\beta} \Gamma(2\beta)$, so that $\|\widehat{\psi_\beta}\|^2_{L^2(\bR_+, d\omega/\omega)}=1/(2\pi)$  (${\bR}_+=(0,+\infty)$). This normalization is chosen for the corresponding wavelet transform, defined below, to be an isometry. Observe also that $\|\psi_\beta\|_{L^2}^2=\beta/(2\pi)$.

Let $H^2(\bR)$ be the Hardy space of functions in $L^2(\bR)$ whose Fourier transform is supported in $[0,+\infty)$, endowed with the $L^2$ norm (hence $\|f\|_{H^2}=\|f\|_{L^2}$). In particular, $\psi_\beta\in H^2(\bR)$. For $f\in H^2(\bR)$ we consider the corresponding \textit{wavelet transform} $\mathcal{W}_{\psi_\beta} f$, defined by
\[
\mathcal{W_{\psi_\beta}} f(x,y)= \frac{1}{\sqrt{y}}\int_{\bR} f(t) \overline{\psi_\beta\Big(\frac{t-x}{y}\Big)}\, dt
\]
for $(x,y)\in\bR\times\bR_+$.

Consider now the measure $d\nu=y^{-2}dxdy$, that is the left Haar measure on $\bR\times\bR_+\simeq\bC_+$ regarded as the ``$ax+b$" group.
Then, as anticipated, $\mathcal{W}_{\psi_\beta}: H^2(\bR)\to L^2(\bR\times\bR_+,d\nu)$ is an isometry (not onto); cf. \cite{grossmann}.

For $F\in L^p(\bR\times\bR_+,d\nu)$, $1\leq p\leq\infty$ and $f,g\in H^2(\bR)$ we define the \textit{wavelet localization operator} $L_{F,\beta}$ by
\[
\langle L_{F,\beta} f,g\rangle_{L^2}= \int_{\bR\times\bR_+}F \, \mathcal{W}_{\psi_\beta}f \,\overline{\mathcal{W}_{\psi_\beta} g}\, d\nu.
\]
Spectral properties of such operator have been studied by many authors; see e.g. \cite{daubechies_book,daubechies_paul,wong} and the references therein.
Our goal is to find optimal estimates
\begin{equation}\label{eq ast wav}
 \|L_{F,\beta}\|_{H^2\to H^2}\leq C=C(p,A,B),
\end{equation}
where $F\in L^p(\Hp,d\nu)$ is subject to the double constraint
\begin{equation}
\label{constraintsH}
\Vert F\Vert_{L^\infty(\Hp)}\leq A\qquad\text{and}\qquad
\Vert F\Vert_{L^p(\Hp,d\nu)}\leq  B
\end{equation}
(cfr. \eqref{stimaC} and \eqref{constraints}),
where $p$, $A$, and $B$ are as in Theorem \ref{thm thm2 mult}.

We will need the following result from \cite{joao_tilli}, which is the analogue of Theorem \ref{generalFK}
for the wavelet transform:
\begin{theorem}\label{thm wav0}
Let $\beta>0$. For every $\nu$-measurable set $\Delta\subset \bR\times\bR_+$ such that $\nu(\Delta)<\infty$, and every
$f\in H^2(\bR)$ such that $\|f\|_{L^2}=1$, we have
\[
\int_\Delta |\mathcal{W}_{\psi_\beta}f|^2\, d\nu \leq \GH\bigl({\nu(\Delta)}\big),
\]
where
\begin{equation}
\label{defGH}
\GH(s)=1-\Big(1+\frac{s}{4\pi}\Big)^{-2\beta},\quad s>0.
\end{equation}
Moreover, if $\nu(\Delta)>0$, equality occurs if and only if $f(t)$ has the form
\begin{equation}
\label{charfh}
t\mapsto \frac{c}{\sqrt{y}}\psi_\beta\Big(\frac{t-x_0}{y_0}\Big)
\end{equation}
for some $c\in\bC$ such that $|c|^2=2\pi/\beta$ and some $(x_0,y_0)\in\bR\times\bR_+$, and $\Delta$ is 
$\nu$-equivalent to a hyperbolic disc of center $z_0=x_0+ i y_0$.
\end{theorem}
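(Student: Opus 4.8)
The plan is to realize the Cauchy-wavelet transform as a weighted Bergman transform on the upper half-plane $\Hp$ and then run the hyperbolic counterpart of the Faber--Krahn symmetrization argument behind \eqref{FKgen}. Since $\widehat{\psi_\beta}$ is supported in $[0,\infty)$, for $f\in H^2(\bR)$ a Fourier-side computation gives $\mathcal{W}_{\psi_\beta}f(x,y)=\mathrm{const}\cdot y^{\beta+1/2}\Phi(x+iy)$, where $\Phi(z)=\int_0^\infty\widehat f(\omega)\,\omega^\beta e^{iz\omega}\,d\omega$ is holomorphic on $\Hp$. Writing $z=x+iy$, this yields $|\mathcal{W}_{\psi_\beta}f(z)|^2\,d\nu=\mathrm{const}\cdot y^{2\beta-1}|\Phi(z)|^2\,dx\,dy$, so $\Phi$ belongs to the weighted Bergman space with weight $y^{2\beta-1}$, and the isometry property of $\mathcal{W}_{\psi_\beta}$ reads $\int_{\Hp}|\mathcal{W}_{\psi_\beta}f|^2\,d\nu=\|f\|_{L^2}^2=1$. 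Moreover $|\mathcal{W}_{\psi_\beta}f(x,y)|=|\langle f,\,y^{-1/2}\psi_\beta((\cdot-x)/y)\rangle_{L^2}|\le\|\psi_\beta\|_{L^2}$, so $u:=|\mathcal{W}_{\psi_\beta}f|^2\le\beta/(2\pi)=\GH'(0)$.

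The key pointwise ingredient is a differential inequality for $u$. From $\log u=(2\beta+1)\log y+2\log|\Phi|+\mathrm{const}$, together with $\Delta\log y=-y^{-2}$ and the subharmonicity of $\log|\Phi|$ (whose distributional Laplacian is nonnegative and vanishes exactly away from the zeros of $\Phi$), one obtains, in terms of the hyperbolic Laplacian $y^2\Delta$ (for which $d\nu$ is the associated area measure),
\[
y^2\Delta\log u\ \ge\ -(2\beta+1),
\]
with equality precisely where $\Phi$ has no zeros. This is the exact hyperbolic analogue of the bound $\Delta\log|\cV f|^2\ge-4\pi$ underlying \eqref{FKgen}: it is the constant curvature $-1$ of the hyperbolic metric that turns the position-dependent defect $-(2\beta+1)/y^2$ into a genuine constant.

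Then comes the symmetrization step, which I expect to be the main obstacle. Let $\mu(t)=\nu(\{u>t\})$ and let $u^\ast$ be the decreasing rearrangement of $u$ with respect to geodesic discs of $\Hp$. Combining the hyperbolic coarea formula, a Cauchy--Schwarz estimate on each level curve, the divergence theorem applied to $\nabla\log u$ together with the inequality above, and the hyperbolic isoperimetric inequality (a region of hyperbolic area $a$ and boundary length $\ell$ satisfies $\ell^2\ge a^2+4\pi a$, with equality only for geodesic discs), one is led to
\[
-\mu'(t)\ \ge\ \frac{\mu(t)+4\pi}{(2\beta+1)\,t}\qquad\text{for a.e. }t>0,
\]
which is saturated exactly by the Cauchy-wavelet coherent states. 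Feeding this into the bathtub principle (which reduces $\int_\Delta u\,d\nu$ to $\int_0^{\nu(\Delta)}u^\ast(s)\,ds$) together with the mass identity $\int_0^\infty\mu(t)\,dt=\int_{\Hp}u\,d\nu=1$ yields $\int_0^a u^\ast(s)\,ds\le\GH(a)$ for every $a>0$, hence $\int_\Delta|\mathcal{W}_{\psi_\beta}f|^2\,d\nu\le\GH(\nu(\Delta))$. A direct computation with the Bergman reproducing kernel of $\Hp$ — equivalently, solving the radial equation $v''+\coth r\,v'=-(2\beta+1)$ in geodesic polar coordinates with $v'$ bounded at $0$, and using that a geodesic disc of radius $r$ has area $4\pi\sinh^2(r/2)$ — identifies the extremal profile as $u^\ast(s)=\GH'(s)=\tfrac{\beta}{2\pi}(1+s/(4\pi))^{-(2\beta+1)}$, produced by the coherent states \eqref{charfh} (with $|c|^2=2\pi/\beta$, so that $\|f\|_{L^2}=1$). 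The technical work lies in justifying the symmetrization machinery on $\Hp$ — regularity of level sets, and the coarea and divergence steps in the presence of the (discrete) zero set of $\Phi$ — and in checking that this ODE inequality, combined with the mass constraint, forces exactly the bound $\GH$ and no sharper one.

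Finally, for the equality statement: if equality holds for some $\Delta$ with $0<\nu(\Delta)<\infty$, then every inequality in the chain must be an equality. Saturation of the isoperimetric inequality on a.e. level set forces all superlevel sets $\{u>t\}$ to be geodesic discs about a common center $z_0=x_0+iy_0$; saturation of the subharmonicity bound forces $\Phi$ to be zero-free on $\Hp$; saturation of the bathtub step forces $\Delta$ to coincide, up to a $\nu$-null set, with one such disc, i.e. with a hyperbolic disc of center $z_0$. A zero-free $\Phi$ whose modulus is hyperbolically radial and decreasing about $z_0$ is then pinned down, up to a unimodular constant, as the normalized Bergman kernel at $z_0$, which is precisely the assertion that $f$ has the form \eqref{charfh}, with $|c|^2=2\pi/\beta$ forced by $\int u\,d\nu=1$. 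Conversely, one verifies by direct substitution that such $f$ and $\Delta$ realize equality.
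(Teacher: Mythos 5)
The paper does not prove Theorem \ref{thm wav0}; it is quoted from the reference \cite{joao_tilli} and used as a black box, so there is no in‑paper argument to compare against. Your sketch is, to the best of my knowledge, precisely the route taken in that reference: pass to the weighted Bergman realization $\mathcal{W}_{\psi_\beta}f(x,y)=c_\beta^{-1}y^{\beta+1/2}\Phi(x+iy)$ with $\Phi$ holomorphic, obtain the curvature‑normalized log‑subharmonicity bound $y^2\Delta\log u\ge-(2\beta+1)$ for $u=|\mathcal{W}_{\psi_\beta}f|^2$, and run the hyperbolic Faber--Krahn machinery (coarea, divergence theorem on $\nabla\log u$, and the isoperimetric inequality $\ell^2\ge a^2+4\pi a$), which is the constant‑curvature $-1$ analogue of the Euclidean argument in \cite{nicola_tilli} behind Theorem \ref{generalFK}. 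Your differential inequality $-\mu'(t)\ge\frac{\mu(t)+4\pi}{(2\beta+1)t}$, the profile $u^\ast=\GH'$, the radial ODE $v''+\coth r\,v'=-(2\beta+1)$, the area formula $4\pi\sinh^2(r/2)$, and the normalization $|c|^2=2\pi/\beta$ all check out. The one step you explicitly leave open --- that the ODE inequality together with the mass identity $\int\mu=1$ forces $\int_0^a u^\ast\le\GH(a)$ --- closes just as in the Euclidean case: rewriting the ODE in terms of $u^\ast$ gives that $s\mapsto(4\pi+s)^{2\beta+1}u^\ast(s)$ is nondecreasing, while its value at $s=0$ is at most $(4\pi)^{2\beta+1}\,\beta/(2\pi)=(4\pi)^{2\beta+1}\GH'(0)$ by the pointwise bound $u\le\|\psi_\beta\|_{L^2}^2=\beta/(2\pi)$; hence $u^\ast-\GH'$ changes sign at most once, from nonpositive to nonnegative, and since $\int_0^\infty u^\ast=\int_0^\infty\GH'=1$, it follows that $\int_0^a(u^\ast-\GH')\le 0$ for every $a$, which is the claimed bound (and equality at some $a>0$ forces $u^\ast\equiv\GH'$). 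With that inserted, the sketch is a faithful outline of the cited proof, including the characterization of equality via saturated isoperimetric inequalities on every superlevel set, zero‑freeness of $\Phi$, and recovery of $\Phi$ (up to a unimodular factor) from its hyperbolically radial modulus.
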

We recall that in the Poincar\'e upper half-plane $\bC_+$, endowed with the hyperbolic metric $y^{-2}(dx^2+dy^2)$, the open hyperbolic disc of center $z_0$ and $\nu$-measure $s>0$ is given by
\begin{equation}\label{eq disc}
\Big|\frac{z-z_0}{z-\overline{z_0}} \Big|^2<1-\Big(1+\frac{s}{4\pi}\Big)^{-1}.
\end{equation}
This is most easily checked in the Poincar\'e unit disc $|w|<1$, with the metric $4(1-|w|^2)^{-2}|dw|^2$ and corresponding measure $4(1-|w|^2)^{-2} dA(w)$, which is isometric to the above upper half-plane via the map $w= (z-i)/(z+i)$ (by a M\"obius transformation one can reduce to the case $z_0=i$ and the hyperbolic disc $|z-i|/|z+i|<r$ is mapped to $|w|<r$).

To state the main result (the analogue of Theorem \ref{thm thm2} for the wavelet transform), it is
convenient to define the constants
\begin{equation}
\label{defsigma}
\sigma = \frac{p-1}{2\beta p+1},\quad \alpha=\frac{p-1}{2\beta+1}.
\end{equation}
\begin{theorem}\label{thm thm2 wav} Assume $\beta>0$, let $A$,  $B$ and $p$ be as in Theorem \ref{thm thm2 mult}, and consider $F\in L^p(\Hp,d\nu)$ satisfying \eqref{constraintsH}.
\begin{itemize}
\item[(i)] If $p=1$ (and hence $A<\infty$), then
\begin{equation}
\label{punoh}
\|L_{F,\beta}\|_{H^2\to H^2}\leq 
A \,\,\GH(B/A),
\end{equation}
and equality occurs if and only if
$
F=e^{i\theta} A\,\chi_{\cB}
$
for some $\theta\in\bR$, where $\cB\subset\bC_+$ is a 
hyperbolic disc of measure $\nu(\cB)=B/A$, centered at some $z_0\in \bC_+$.
\item[(ii)] If $p>1$ and $(B/A)^p\leq 4\pi\sigma $ (in particular if $A=\infty$), then 
\begin{equation}\label{eq stima1 wav}
\|L_{F,\beta}\|_{H^2\to H^2}\leq\frac{2\beta}{(4\pi)^{1/p}}\,\sigma^{\kp}B,
\end{equation}
with equality  if and only if
\begin{equation}\label{eq estr1 wav}
F(z)=e^{i\theta}\lambda\Big(1-\Big|\frac{z-z_0}{z-\overline{z_0}} \Big|^2  \Big)^{-\alpha },\qquad z\in\bC_+,
\end{equation}
for some $\theta\in\bR$ and $z_0\in\bC_+$, with $\lambda$ as in \eqref{eq lambda 1}.
\medskip
\item[(iii)] If $p>1$ and $(B/A)^p> 4\pi\sigma$, then
\begin{equation}\label{eq stima2 wav}
\|L_{F,\beta}\|_{H^2\to H^2}\leq A\Big(1-p^{2\beta} \Big(\frac\sigma\alpha\Big) ^{2\beta+1}\Big(1+\frac{(B/A)^p}{4\pi}\Big)^{-2\beta} \Big),
\end{equation}
with equality
 if and only if
\begin{equation}\label{eq estr2 wav}
F(z)=e^{i\theta}\min\Big\{\lambda\Big(1-\Big|\frac{z-z_0}{z-\overline{z_0}} \Big|^2 \Big)^{-\alpha},A\Bigr\},\quad z\in\bC_+,
\end{equation}
for some $\theta\in\bR$ and $z_0\in\bC_+$, with $\lambda$ as in \eqref{eq lambda 2}.

\medskip

\end{itemize}
In the above cases where equality occurs,  we have $\Vert F\Vert_{L^p}=B$,
and there holds $\|L_{F,\beta}\|_{H^2\to H^2}=|\langle L_{F,\beta} f,g\rangle|$ 
for some $f,g\in H^2(\bR)$ such that $\|f\|_{L^2}=\|g\|_{L^2}=1$, if and only if 
both $f$ and $g$ have the form as in \eqref{charfh}, possibly with different $c$'s, but with the
same $(x_0,y_0)\in\bR\times\bR_+$ defined by $z_0=x_0+iy_0$.
% we have
%\begin{equation}\label{eq deff}
%f(t)=\frac{c}{\sqrt{y}}\psi_\beta\Big(\frac{t-x}{y}\Big),\qquad g(t)=c' f(t).
%\end{equation}
\end{theorem}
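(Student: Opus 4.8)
The plan is to retrace, almost line for line, the argument that gave Theorem~\ref{thm thm2 mult}, with the Gaussian window replaced by the Cauchy wavelet $\psi_\beta$, the Fock-space concentration estimate (Theorem~\ref{generalFK}) replaced by its hyperbolic analogue (Theorem~\ref{thm wav0}), and the profile $G$ of \eqref{defG} replaced by $G_H$ of \eqref{defGH}. Concretely, I would proceed in three steps: (a) an abstract bound for $\|L_{F,\beta}\|_{H^2\to H^2}$ in terms of the $\nu$-distribution function of $|F|$, parallel to Theorem~\ref{thm4}; (b) the solution of the \emph{same} variational problem of Section~\ref{sec due}, but with $G$ replaced by $G_H$; (c) reconstruction of the extremal weights $F$ from the optimal distribution function by means of the geometry of hyperbolic discs.

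Step (a). For normalized $f,g\in H^2(\bR)$ I would start from $\langle L_{F,\beta}f,g\rangle=\int_{\Hp}F\,\mathcal{W}_{\psi_\beta}f\,\overline{\mathcal{W}_{\psi_\beta}g}\,d\nu$, bound $|F|$ pointwise by the layer-cake formula, apply the Cauchy--Schwarz inequality, and then invoke Theorem~\ref{thm wav0} on each super-level set $\{|F|>t\}$, which has finite $\nu$-measure $\mu(t):=\nu(\{|F|>t\})$ because $F\in L^p(\Hp,d\nu)$ with $p<\infty$. Exactly as in the proof of Theorem~\ref{thm4}, this yields
\[
\|L_{F,\beta}\|_{H^2\to H^2}\leq\int_0^\infty G_H\bigl(\mu(t)\bigr)\,dt,
\]
with equality if and only if $F(z)=e^{i\theta}\rho\bigl(d_H(z,z_0)\bigr)$ for some $\theta\in\bR$, some $z_0\in\Hp$ and some nonincreasing $\rho$ --- equivalently, by \eqref{eq disc}, all super-level sets $\{|F|>t\}$ are concentric hyperbolic discs centered at $z_0$. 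The equality discussion copies that of Theorem~\ref{thm4}: compactness of $L_{F,\beta}$ (valid for $F\in L^p$, $p<\infty$) produces an optimal pair $(f,g)$; the rigidity part of Theorem~\ref{thm wav0} forces $f$, hence $g$, into the form \eqref{charfh} and one level set into a hyperbolic disc; and the identity $\{|F|>t\}=\bigcup_{s>t}\{|F|>s\}$ propagates the disc description from a.e.\ $t$ to every $t$.

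Step (b). Since $\|F\|_{L^p(\Hp,d\nu)}^p=p\int_0^\infty t^{p-1}\mu(t)\,dt$ and $\|F\|_{L^\infty(\Hp)}\le A$ amounts to $\mu\equiv0$ on $[A,\infty)$, the restriction of $\mu$ to $(0,A)$ belongs to the class $\cC$ of \eqref{defC}, and one is led to maximize $I_H(v):=\int_0^A G_H(v(t))\,dt$ over $v\in\cC$. The function $G_H$ is increasing, strictly concave, with $G_H(0)=0$, $G_H\le1$ and $G_H(s)\le G_H'(0)\,s=\frac{\beta}{2\pi}s$, so Propositions~\ref{pro pro1} and \ref{pro pro2} (existence of a maximizer, saturation of the integral constraint, removal of the monotonicity requirement) carry over verbatim; so does the Euler--Lagrange analysis of Theorem~\ref{thmcharmax}. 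On the support of the maximizer $u$ one gets $G_H'(u(t))=c\,t^{p-1}$ for some $c>0$, and since $G_H'(s)=\frac{\beta}{2\pi}\bigl(1+s/(4\pi)\bigr)^{-(2\beta+1)}$ this is solved by $u(t)=4\pi\max\{(t/\lambda)^{-\alpha}-1,\,0\}$ on $(0,A)$, with $\alpha$ as in \eqref{defsigma}; the perturbation argument of Theorem~\ref{thmcharmax} --- using that $G_H'$ is strictly decreasing, so that the optimality inequality $G_H'\bigl(u(M^-)\bigr)\ge G_H'(0)$ forces $u(M^-)=0$ --- shows that $u$ has no downward jump unless $\lambda>A$. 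For $p=1$ the maximizer is the constant $B/A$, by Jensen's inequality and strict concavity of $G_H$. Imposing the saturated constraint $p\int_0^A t^{p-1}u(t)\,dt=B^p$ then determines $\lambda$: a direct computation gives $\lambda^p=B^p/(4\pi\sigma)$ when $(B/A)^p\le4\pi\sigma$ (case (ii), where $\lambda\le A$), and an implicit equation for $\lambda>A$ in case (iii); substituting back and evaluating the resulting elementary integrals produces the bounds \eqref{punoh}, \eqref{eq stima1 wav} and \eqref{eq stima2 wav}.

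Step (c) and conclusion. Chaining (a) and (b), $\|L_{F,\beta}\|_{H^2\to H^2}\le I_H(\mu)\le\sup_{\cC}I_H=I_H(u)$, which is the claimed estimate. Equality throughout forces simultaneously that $|F|$ be a nonincreasing function of $d_H(\cdot,z_0)$ with $F=e^{i\theta}|F|$ (from step (a)) and that $\mu$ coincide with the unique maximizer $u$ (from step (b)); then, for every $t$ in the relevant range, $\{|F|>t\}$ is the hyperbolic disc centered at $z_0$ of $\nu$-measure $u(t)$, and inverting the disc--measure relation \eqref{eq disc} expresses $|F(z)|$ as an explicit power of $1-\bigl|\frac{z-z_0}{z-\overline{z_0}}\bigr|^2$ (the exponent being read off from $G_H$, and the result truncated at height $A$ in case (iii)), which is \eqref{eq estr1 wav}, \eqref{eq estr2 wav}, and $F=e^{i\theta}A\,\chi_{\cB}$ when $p=1$. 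The identity $\|F\|_{L^p}=B$ follows from saturation of the constraint, and the description of the admissible pairs $(f,g)$ is read off from the equality case of the wavelet analogue of Theorem~\ref{thm4}: both $f$ and $g$ must have the form \eqref{charfh}, with the same center $z_0$. I expect the only genuinely delicate point to be organizational rather than conceptual --- carrying the Euler--Lagrange identity through to the explicit constants $\sigma,\alpha$ in all three regimes, and, in the reconstruction, correctly handling case (iii), where $\mu$ jumps at $t=A$ while $|F|$ stays continuous, exactly as in the treatment of \eqref{solvet2} in Section~\ref{secdim}.
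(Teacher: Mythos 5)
Your proposal reproduces the paper's own argument almost step for step: the layer-cake/Cauchy--Schwarz reduction to Theorem~\ref{thm wav0} (yielding the hyperbolic analogue of Theorem~\ref{thm4}), the identical variational problem over the class $\cC$ with $G$ replaced by $\GH$ (Jensen for $p=1$, Euler--Lagrange $\GH'(u)=ct^{p-1}$ for $p>1$, the perturbation argument ruling out a jump when $\lambda\le A$), and the reconstruction of $|F|$ from the optimal $u$ via the disc--measure relation \eqref{eq disc}, truncating at $A$ in case~(iii). Your reading that $\GH$ is strictly \emph{concave} (not convex, as the paper's parenthetical mistakenly states) is correct and is precisely what Jensen's inequality needs there.
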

In particular, for $p=1$ we therefore obtain the estimate
\[
\|L_{F,\beta}\|_{H^2\to H^2}\leq \|F\|_{L^\infty} \Big(1-\Big(1+\frac{\|F\|_{L^\infty}/\|F\|_{L^1}}{4\pi}\Big)^{-2\beta}\Big),
\]
which is the wavelet counterpart of \eqref{eq galbis}.

As for the short-time Fourier transform, an intermediate step towards the proof of this theorem is the
following  result (the analogue of Theorem \ref{thm4} for the wavelet transform):
\begin{theorem}
\label{thm4h}
Assume $F\in L^p(\Hp,d\nu)$ for some $p\in [1,\infty)$, and let 
\begin{equation}
\label{defmuh}
\mu(t)=\nu\left(\left\{ F(z)|>t\right\}\right),\quad t>0,
\end{equation}
denote the distribution function of $|F|$. Then
\begin{equation}
\label{stimanormaH}
\Vert L_{F,\beta}\Vert_{H^2\to H^2}\leq \int_0^\infty \GH\bigl(\mu(t)\bigr)\,dt
\end{equation}
where $\GH$ is as in \eqref{defGH}, with
equality if and only if $F(z)=e^{i\theta}|F(z)|$
 and
$|F(z)|=\rho(|z-z_0|^2/|z-\overline{z_0}|^2)$, for some $\theta\in\bR$, some $z_0\in\Hp$ and
some decreasing function
$\rho:[0,1)\to [0,+\infty)$.
% someis (up to multiplication by a unimodular
%constant) nonnegative, radially symmetric around some $z_0\in \bR^{2d}$, and radially decreasing.
In this case,  there holds
\begin{equation}
\label{fandgh}
\left\vert\langle L_F f,g\rangle\right\vert =\Vert L_F\Vert_{L^2\to L^2}
\end{equation}
for some $f,g\in H^2(\bR^d)$ such that $\Vert f\Vert_{L^2}=\Vert g\Vert_{L^2}=1$, if (and only
if, when  $F$ is not identically zero) both $f$ and $g$ are functions of the kind \eqref{charfh}, possibly with different 
$c$'s, but with the same $(x_0,\omega_0)$ defined by $x_0+iy_0=z_0$.
\end{theorem}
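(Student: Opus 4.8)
The plan is to transcribe, essentially verbatim, the proof of Theorem~\ref{thm4}, replacing the short-time Fourier transform $\cV$ by the wavelet transform $\mathcal{W}_{\psi_\beta}$, the function $G$ by $\GH$, and Theorem~\ref{generalFK} by its wavelet analogue Theorem~\ref{thm wav0}, while ``balls centered at $z_0$'' become ``hyperbolic discs centered at $z_0$'' throughout. First I would start, for $f,g\in H^2(\bR)$ normalized in $L^2$, from the pointwise bound $|\langle L_{F,\beta}f,g\rangle|\le\int_{\Hp}|F|\,|\mathcal{W}_{\psi_\beta}f|\,|\mathcal{W}_{\psi_\beta}g|\,d\nu$ coming from the weak definition of $L_{F,\beta}$, and apply Cauchy--Schwarz to bound it by the geometric mean of $\int_{\Hp}|F|\,|\mathcal{W}_{\psi_\beta}f|^2\,d\nu$ and $\int_{\Hp}|F|\,|\mathcal{W}_{\psi_\beta}g|^2\,d\nu$. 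Each factor is then estimated by inserting the layer-cake representation $|F|=\int_0^m\chi_{\{|F|>t\}}\,dt$ (with $m=\mathop{\rm ess\,sup}|F|$), exchanging the order of integration, and bounding $\int_{\{|F|>t\}}|\mathcal{W}_{\psi_\beta}f|^2\,d\nu\le\GH(\mu(t))$ via Theorem~\ref{thm wav0} (legitimate since $\mu(t)<\infty$ for $t>0$ because $F\in L^p$, $p<\infty$); taking the supremum over admissible $f,g$ yields \eqref{stimanormaH}.

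For the equality analysis I would use, as in the short-time Fourier case, that $L_{F,\beta}$ is compact when $p<\infty$, so its norm is attained by some admissible pair $f,g$; then equality in \eqref{stimanormaH} is equivalent to simultaneous equality in the Cauchy--Schwarz step and in the two layer-cake estimates. Equality in the layer-cake estimate for $f$ forces $\int_{\{|F|>t\}}|\mathcal{W}_{\psi_\beta}f|^2\,d\nu=\GH(\mu(t))$ for a.e.\ $t\in(0,m)$; applying the rigidity part of Theorem~\ref{thm wav0} at one such $t$ pins down $f$ to be of the form \eqref{charfh} for some $z_0=x_0+iy_0$ and forces $\{|F|>t\}$ to be a hyperbolic disc centered at $z_0$, and then running it at a.e.\ $t$ (and finally every $t$, since $\{|F|>t\}=\bigcup_{s>t}\{|F|>s\}$) shows that all superlevel sets of $|F|$ are hyperbolic discs centered at this same $z_0$. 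By the description \eqref{eq disc}, hyperbolic discs of center $z_0$ are precisely the sublevel sets of the function $z\mapsto|z-z_0|^2/|z-\overline{z_0}|^2$, so this is equivalent to $|F(z)|=\rho(|z-z_0|^2/|z-\overline{z_0}|^2)$ for some decreasing $\rho:[0,1)\to[0,+\infty)$. The same reasoning for $g$ gives $\mathcal{W}_{\psi_\beta}g=e^{i\alpha}\mathcal{W}_{\psi_\beta}f$ for some $\alpha\in\bR$, so the Cauchy--Schwarz step is automatically an equality, and equality in the first inequality then forces $F=e^{i\theta}|F|$ a.e., exactly as in the proof of Theorem~\ref{thm4}, provided $|\mathcal{W}_{\psi_\beta}f(z)|^2>0$ for all $z\in\Hp$.

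The only nonroutine point is precisely this last positivity --- the counterpart of ``$|\cV f|^2$ is a strictly positive Gaussian'' in the proof of Theorem~\ref{thm4}: for $f$ of the form \eqref{charfh}, $|\mathcal{W}_{\psi_\beta}f(z)|^2>0$ everywhere on $\Hp$. I expect this to follow from the explicit formula for $\mathcal{W}_{\psi_\beta}\psi_\beta$, which (up to a nonvanishing factor) is a positive power of $\big(1-|z-z_0|^2/|z-\overline{z_0}|^2\big)$, i.e.\ the wavelet analogue of the Gaussian, and which is in any case needed --- and presumably already computed --- in the proof of Theorem~\ref{thm wav0} in \cite{joao_tilli}. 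Granting this positivity and the compactness of $L_{F,\beta}$ for $p<\infty$ (both standard), the whole argument is a faithful translation of the proof of Theorem~\ref{thm4} into the hyperbolic geometry of $\Hp$, and I do not anticipate any genuine obstacle beyond bookkeeping with hyperbolic discs in place of Euclidean balls.
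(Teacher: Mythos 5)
Your proposal is correct and takes essentially the same approach as the paper, which itself states that Theorem~\ref{thm4h} ``is proved by the same slicing argument as Theorem~\ref{thm4}, now using Theorem~\ref{thm wav0} in place of Theorem~\ref{generalFK}'' and leaves the transcription to the reader. You have spelled out that transcription faithfully, and you correctly isolated the one point the paper leaves implicit --- the strict positivity of $|\mathcal{W}_{\psi_\beta}f|^2$ on $\Hp$ for $f$ of the form \eqref{charfh}, which indeed holds since $|\mathcal{W}_{\psi_\beta}f(z)|^2$ is a positive multiple of $\bigl(1-|z-z_0|^2/|z-\overline{z_0}|^2\bigr)^{2\beta}$, nonvanishing because $|z-z_0|<|z-\overline{z_0}|$ for $z,z_0\in\Hp$.
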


The proof of the last two theorems follows the same pattern as the proof of Theorems \ref{thm thm2}
and \ref{thm4}, through the study of a variational problem similar to \eqref{eq uno}
and results analogous to those of Section \ref{sec due}. We omit the details, limiting ourselves to
describe how the proofs can be adapted to the hyperbolic setting.

Apart from obvious changes (such as replacing $\rdd$ with $\Hp$ and $dz$ with $d\nu(z)$ in the
integrals etc.), Theorem \ref{thm4h} is proved by the same slicing argument as Theorem \ref{thm4}, 
now using 
Theorem \ref{thm wav0}
in place of Theorem \ref{generalFK}, so that $F$ achieves
equality in \eqref{stimanormaH} if and only if $F(z)=e^{i\theta}|F(z)|$
and all the superlevel sets $\{|F|>t\}$ are hyperbolic discs  centered at some $z_0\in \Hp$.
Since every such disc has the form as in \eqref{eq disc}, reconstructing (for a.e. $z$)
$|F(z)|$ 
as in
\eqref{lcake}, we see that $|F(z)|$ only depends on $|z-z_0|^2/|z-\overline{z_0}|^2$, and hence
$|F(z)|=\rho(|z-z_0|^2/|z-\overline{z_0}|^2)$ for some (necessarily decreasing) function
$\rho:[0,1)\to [0,+\infty)$, as claimed.

Subsequently, to estimate the right hand side of \eqref{stimanormaH}, one studies the variational
problem
\begin{equation}\label{eq unoh}%anche \label{eq tre}
\sup_{v\in\cC}I(v)\quad\text{where}\quad
    I(v):= \int_0^{A} \GH(v(t))\, dt
\end{equation}
(the analogue of \eqref{eq uno}, with the same $\cC$ as in \eqref{defC}), and
proceeding as in Section \ref{sec due} one proves a result similar
to Theorem \ref{thmcharmax}, i.e. the existence of a unique maximizer $u\in\cC$, 
that we shall identify distinguishing
three different cases as
(i), (ii) and (iii) of Theorem \ref{thm thm2 wav}.
Then, by \eqref{stimanormaH},
\begin{equation}
\label{stimanormas}
\|L_{F,\beta}\|_{H^2\to H^2}\leq I(\mu)\leq \sup_{v\in\cC} I(v)=I(u).
\end{equation}
When $p=1$ (case (i))
 $u=B/A$ is constant, again
by Jensen inequality (note that $\GH(s)$ is strictly convex), and this eventually leads to \eqref{punoh}.

When $p>1$, arguing exactly as for \eqref{eq cinque*} one proves
that $u$ satisfies $\GH'(u(t))=c t^{p-1}$ on the interval $(0,M)$ where
 $u>0$, which 
can be rewritten as $\GH'(u(t))=2\beta( t/\lambda)^{p-1}$ on $(0,M)$
for some  $\lambda>0$
 (cfr. \eqref{eqsei*}). Then, reasoning as after \eqref{lgM*}, one proves the continuity of $u$ on $(0,A)$ also when $M<A$, so that
 computing $\GH'(s)$
from \eqref{defGH} and solving for $u(t)$, recalling \eqref{defsigma} one eventually obtains
 \begin{equation}
 \label{eq u wav}
 u(t)=u_\lambda(t)= 4\pi\,\max\{(t/\lambda)^{-\alpha}-1,0\},\quad t\in (0,A),
 \end{equation}
cfr. \eqref{expru*}. The value of
 $\lambda>0$ can be uniquely determined by the condition that the constraint
 \eqref{eq due} is saturated, i.e. by solving the equation
 \begin{equation}\label{eq const wav}
B^p=h(\lambda):=  p\int_0^A t^{p-1} u_\lambda(t)\, dt
 \end{equation}
(again, $h$ is strictly increasing).
Since now (cfr. \eqref{hA}), with the notation as in \eqref{defsigma}, for $A<\infty$ we have
\[
h(A)=4\pi p\int_0^A t^{p-1} \Bigl((t/A)^{-\alpha}-1\Bigr)\,dt=
\frac{4\pi(p-1)}{2\beta p+1} A^p=4\pi\sigma A^p,
\]
we see from \eqref{eq const wav} that $\lambda\leq A$ when 
$(B/A)^p\leq 4\pi\sigma$ (case (ii)), while $\lambda>A$
when $(B/A)^p> 4\pi\sigma$ (case (iii)). In case (ii), by \eqref{eq u wav},
\eqref{eq const wav} becomes
\[
B^p=h(\lambda)=4\pi p\int_0^\lambda t^{p-1} \Bigl((t/\lambda)^{-\alpha}-1\Bigr)\,dt=
4\pi\sigma \lambda^p,
\]
whence 
\begin{equation}
\label{eq lambda 1}
\lambda=B (4\pi\sigma)^{-1/p}.
\end{equation}
Moreover, since $\GH(0)=0$, an explicit computation based on \eqref{eq unoh} and \eqref{eq u wav} gives
 \begin{align*}
 I(u_\lambda(t))&= \int_0^{\lambda} \Big(1-\big((t/\lambda)^{-\alpha}\big)^{-2\beta}\Big)\, dt
 =\lambda\int_0^1 \Big(1-\tau^{2\alpha\beta} \Big)\, d\tau\\
  &=\lambda\left(1-\frac 1 {2\alpha\beta+1}\right)=\frac{2\beta}{(4\pi)^{1/p}}\,\sigma^{1-1/p}B,
 \end{align*}
 and \eqref{eq stima1 wav} follows from \eqref{stimanormas}.

Similarly, in case (iii) where $(B/A)^p>  \frac{4\pi(p-1)}{2\beta p+1}$ and $\lambda>A$, 
\eqref{eq u wav} simplifies to $ u(t)=4\pi\bigl((t/\lambda)^{-\alpha}-1\bigr)$,
%\quad t\in (0,A)
so that \eqref{eq const wav} becomes
\[
B^p=4\pi p \int_0^A t^{p-1} \bigl((t/\lambda)^{ -\alpha}-1\bigr)\,dt
=4\pi A^p \left(\frac {p\sigma}{\alpha} \left(\frac A\lambda\right)^{-\alpha}-1
\right)
\]
whence
\begin{equation}
\label{eq lambda 2}
\lambda=A\left(\frac{4\pi A^p p\sigma}{\alpha(B^p+4A^p\pi)}\right)^{-\frac 1\alpha}
\end{equation}
and, by an elementary computation,
\[
I(u_\lambda)=\int_0^{A} \Big(1-\big((t/\lambda)^{-\alpha}\big)^{-2\beta}\Big)\, dt= A\Big(1-p^{2\beta} \Big(\frac\sigma\alpha\Big)^{2\beta+1}\Big(1+\frac{(B/A)^p}{4\pi}\Big)^{-2\beta} \Big).
\]
Then \eqref{eq stima2 wav} follows from \eqref{stimanormas}.

Finally, $F$ attains equality in \eqref{punoh}, \eqref{eq stima1 wav}, or \eqref{eq stima2 wav}
(i.e. equalities in \eqref{stimanormas}), if and only if equality occurs in \eqref{stimanormaH} and
$\mu(t)$ coincides with $u(t)$ (extended to $u(t)=0$ for $t\geq A$, if $A<\infty$). This, by Theorem \ref{thm4h},
means that $F=e^{i\theta} |F|$ and $|F(z)|=\rho(|z-z_0|^2/|z-\overline{z_0}|^2)$  (where $z_0\in\Hp$
and $\rho:[0,1)\to[0,+\infty)$ is decreasing), thus we can reconstruct $|F(z)|$ by knowledge of its distribution
function $\mu(t)=u(t)$. 
Indeed, 
since for every $t\in (0,A)$ the superlevel sets $\{|F|>t\}$
(being hyperbolic discs centered at $z_0$) have the form as in
\eqref{eq disc} with $s=u(t)$, i.e.
\[
\{|F(z)|>t\}=\left\{z\in \Hp\quad\text{such that}\quad\Big|\frac{z-z_0}{z-\overline{z_0}} \Big|^2< 1-\Big(1+\frac{u(t)}{4\pi}\Big)^{-1}\right\},
\]
using the layer cake representation \eqref{lcake} (with $m=A$ and $z\in\Hp$) one obtains,
for a.e. $z\in\Hp$,
\[
|F(z)|=\sup\left\{t\in (0,A) \quad\text{such that}\quad\Big|\frac{z-z_0}{z-\overline{z_0}} \Big|^2< 1-\Big(1+\frac{u(t)}{4\pi}\Big)^{-1} 
\right\}
\]
In case (i), when $p=1$ and $u(t)=B/A$ is constant on $(0,A)$, 
 we see that $F(z)$
is as claimed after \eqref{punoh}.
On the other hand, when $p>1$, an elementary computation based on \eqref{eq u wav} reveals that
\[
|F(z)|=\min \left\{
\lambda\Big(1-\Big|\frac{z-z_0}{z-\overline{z_0}} \Big|^2 \Big)^{\frac 1
\alpha},A\right\}.
\]
In case (ii), where $\lambda\leq A$ (as given in \eqref{eq lambda 1}), since $F=e^{i\theta}|F|$ one
obtains \eqref{eq estr1 wav}. Similarly, in case (iii) where $\lambda > A$ (as given in \eqref{eq lambda 2}),
one ontains \eqref{eq estr2 wav}.

\bibliographystyle{abbrv}
\bibliography{biblio.bib}

\end{document}